\DeclareMathOperator{\argmin}{\mbox{argmin}}
\def\bn{\hfill \\ \smallskip\noindent}
\def\argmin{\mathop{\rm argmin}}
\def\vx{x}
\def\proj{\mbox{proj}}
\def\prox{\mbox{prox}}
\newcommand{\beq}{\begin{equation}}
\newcommand{\eeq}{\end{equation}}
\newcommand{\st}{{\rm s.t.}}
\newcommand{\trace}{{\mbox{\textrm{\rm Tr}}}}
\newcommand{\cC}{{\mbox{$\mathcal{C}$}}}
\begin{document}
\bigskip
\def\theequation {\thesection.\arabic{equation}}
\def\pn {\par\smallskip\noindent}
\def \bn {\hfill \\ \smallskip\noindent}
\newcommand{\fs}{f_1,\ldots,f_s}
\newcommand{\f}{\vec{f}}
\newcommand{\hx}{\hat{x}}
\newcommand{\hM}{\widehat{M}}
\newcommand{\hy}{\hat{y}}
\newcommand{\barhx}{\bar{\hat{x}}}
\newcommand{\vecx}{x_1,\ldots,x_m}
\newcommand{\xoy}{x\rightarrow y}
\newcommand{\barx}{{\bar x}}
\newcommand{\bary}{{\bar y}}
\newcommand{\tx}{\widetilde{x}}
\newcommand{\tz}{\widetilde{z}}
\newcommand{\ty}{\widetilde{y}}
\newcommand{\tX}{\widetilde{X}}
\newcommand{\tf}{\widetilde{f}}
\newcommand{\tu}{\widetilde{u}}
\newcommand{\tw}{\widetilde{w}}
\newcommand{\tM}{\widetilde{M}}
\newtheorem{theorem}{Theorem}[section]
\newtheorem{lemma}{Lemma}[section]
\newtheorem{corollary}{Corollary}[section]
\newtheorem{proposition}{Proposition}[section]
\newtheorem{definition}{Definition}[section]
\newtheorem{claim}{Claim}[section]
\newtheorem{remark}{Remark}[section]
\newtheorem{example}{Example}[section]

\newcommand{\newsection}{\setcounter{equation}{0}\section}

\def\br{\break}
\def\smskip{\par\vskip 5 pt}
\def\proof{\bn {\bf Proof.} }
\def\QED{\hfill{\bf Q.E.D.}\smskip}
\def\qed{\quad{\bf q.e.d.}\smskip}

\newcommand{\cM}{\mathcal{M}}
\newcommand{\cJ}{\mathcal{J}}
\newcommand{\cT}{\mathcal{T}}
\newcommand{\bx}{\mathbf{x}}
\newcommand{\bp}{\mathbf{p}}
\newcommand{\bz}{\mathbf{z}}

\title{\bf Iteration Complexity Analysis of Block Coordinate
Descent Methods}
 \vskip 0.5cm
\author{{Mingyi Hong}\thanks{Department of Industrial and
Manufacturing Systems Engineering, Iowa State University, IA, USA. Email: \texttt{mingyi@iastate.edu}}, \ {Xiangfeng Wang}\thanks{ Shanghai Key Lab. of Trustworthy Computing, Software Engineering Institute, East China Normal University, Shanghai 200062, China. Email: \texttt{xfwang@sei.ecnu.edu.cn}}, \ {Meisam Razaviyayn\thanks{Department of Electrical Engineering, Stanford University, Palo Alto, CA, USA. Email: \texttt{meisam@stanford.edu}}\ \ and Zhi-Quan Luo\thanks{Department of Electrical and
Computer Engineering, University of Minnesota, Minneapolis, MN, USA. Email: \texttt{luozq@umn.edu}}}
\\[20pt]}
\maketitle
\vskip 1.0cm

\begin{abstract}

 In this paper, we provide a unified iteration complexity analysis for a family of general block coordinate descent (BCD) methods, covering popular methods such as the block coordinate gradient descent (BCGD) and the block coordinate proximal gradient (BCPG), under various different coordinate update rules. We unify these algorithms under the so-called Block Successive Upper-bound Minimization (BSUM) framework, and show that for a broad class of multi-block nonsmooth convex problems, all algorithms covered by the BSUM framework achieve a global sublinear iteration complexity of ${\cal{O}}(1/r)$, where $r$ is the iteration index. { Moreover, for the case of block coordinate minimization (BCM) where each block is minimized exactly, we establish the sublinear convergence rate of $O(1/r)$ without per block strong convexity assumption. Further, we show that when there are only two blocks of variables, a special BSUM algorithm with Gauss-Seidel rule can be accelerated to achieve an improved rate of ${\cal{O}}(1/r^2)$.}
\end{abstract}

\vspace*{\fill}



\vspace{-1.2cm}
\newsection{\bf Introduction} \label{sub:intro}
\vspace{-0.6cm}
Consider the problem of minimizing a nonsmooth convex function $f(x)$ of the form:
\begin{equation}\label{eq:bcd_problem}
\vspace{-0.1cm}
\begin{array}{ll}
\mbox{minimize} & \displaystyle f(x):=g\left(x_1,\cdots, x_K\right)+\sum_{k=1}^{K}h_k(x_k)\\ [10pt]
\mbox{subject to}  & x_k\in X_k,\quad k=1,...,K
\end{array}
\end{equation}
\vspace{-0.1cm}
where $g(\cdot)$ is a smooth convex function; $h_k$ is a nonsmooth convex function (possibly with extended values); $x=(x_1^T,...,x_K^T)^T\in\mathbb{R}^n$ is a partition of
the optimization variable $x$, with $x_k\in X_k\subseteq\mathbb{R}^{n_k}$. Let  $X:=\prod_{k=1}^{K}X_k$ denote the
feasible set for $x$.

A well known family of algorithms for solving \eqref{eq:bcd_problem} is the block coordinate descent (BCD) type method whereby, at every iteration a single block of variables is optimized  while the remaining blocks are held fixed. One of the best known algorithms in the BCD family is the block coordinate minimization (BCM) algorithm, where at iteration~$r$, the blocks are updated by solving the following problem {exactly} \cite{bertsekas99}
\begin{equation}
\label{eq:GSperblock}
\begin{split}
x_k^r \in \arg\min_{x_k\in X_k} \quad &g(x_1^{r},\ldots,x_{k-1}^{r},x_k,x_{k+1}^{r-1},\ldots, x_K^{r-1})+h_k(x_k), \ k=1,\cdots,
K.\end{split}
\end{equation}
When problem \eqref{eq:GSperblock} is not easily solvable, a popular variant is to solve an approximate version of problem \eqref{eq:GSperblock}, yielding the so-called block coordinate gradient descent (BCGD) algorithm, or the block coordinate proximal gradient (BCPG) algorithm in the presence of nonsmooth function \cite{tseng09coordiate, zhang13linear, shalev11, Beck13}. In particular, at a given iteration $r$, the following problem is solved for each block $k$:
\begin{equation}
\label{eq:BCPB}
\begin{split}
x_k^r = \arg\min_{x_k\in X_k}\; &\langle\nabla_k  g(x_1^{r},\ldots,x_{k-1}^{r},x_{k}^{r-1},\ldots, x_K^{r-1}), x_k-x^{r-1}_k\rangle+\frac{L_k}{2}\|x_k-x^{r-1}_k\|^2+h_k(x_k)\end{split}
\end{equation}
where $L_k>0$ is some appropriately chosen constant. Other variants of the BCD-type algorithm include those that solve different subproblems \cite{Razaviyayn12SUM}, or those with different block selection rules, such as the Gauss-Seidel (G-S) rule, the Gauss-Southwell (G-So) rule \cite{tseng09}, the randomized rule \cite{nestrov12}, the essentially cyclic (E-C) rule \cite{tseng01}, or the maximum block improvement (MBI) rule \cite{Chen2012MBI}.

In all the above mentioned variants of BCD method, each step involves solving a simple subproblem of small size, therefore the BCD method can be quite effective for solving large-scale problems; see e.g., \cite{Friedman10, Razaviyayn12SUM, Saha10, shalev11, nestrov12} and the references therein.
The existing analysis of the BCD method \cite{tseng01,bertsekas96,bertsekas97, ortega72} requires the uniqueness of the minimizer for each subproblem \eqref{eq:GSperblock}, or the quasi convexity of $f$ \cite{Grippo00}.
Recently, a unified BCD-type framework, termed the block successive upper-bound minimization (BSUM) method, is proposed in \cite{Razaviyayn12SUM}. At each iteration of the BSUM method, certain approximate function of the per-block subproblem \eqref{eq:GSperblock} is constructed and optimized. Due to the flexibility in choosing the approximate function, the BSUM includes many BCD-type algorithms as special cases. It is shown in \cite{Razaviyayn12SUM} that the method converges to stationary solutions for nonconvex problems and to global optimal solutions for convex problems, as long as certain regularity conditions are satisfied for the per-block subproblems.

The global rate of convergence for BCD-type algorithm has been studied extensively. When the objective function is strongly convex, the BCD algorithm converges globally linearly \cite{luo93errorbound:10.1007/BF02096261}. When the objective function is smooth and not strongly convex, Luo and Tseng have shown that the BCD method with the classic G-S/G-So update rules converges linearly, provided that a certain local error bound is satisfied around the solution set \cite{Luo92CD,Luo92linear_convergence,Luo93dual, luo93errorbound:10.1007/BF02096261}. In addition, such linear rate is global when the feasible set is compact. This line of analysis has recently been extended to allow certain class of nonsmooth functions in the objective \cite{zhang13linear, tseng09approximation}. For more general problems where the objective is not strongly convex and the error bound condition does not hold, several recent studies have established the  $\mathcal{O}(1/r)$ iteration complexity for various BCD-type algorithms including the randomized BCGD algorithm \cite{nestrov12}, and for more general settings with nonsmooth objective as well \cite{richtarik12, shalev11, lu13complexity}.
When the coordinates are updated according to the traditional G-S/G-So/E-C rule, however, the literature on the iteration complexity for the BCD-type algorithm is scarce. In \cite{Saha10}, Saha and Tewari have proven the $\mathcal{O}(1/r)$ rate for the G-S BCPG algorithm when applied to certain special $\ell_1$ minimization problem. In \cite{Beck13}, Beck and Tetruashvili have shown the $\mathcal{O}(1/r)$ sublinear convergence for the G-S BCGD algorithm for constrained smooth problems. In \cite{Beck13b}, Beck has shown the sublinear convergence for the G-S BCM algorithm (termed Alternating Minimization method therein) when the number of blocks is two. {Although the BCD-type algorithm with G-S rule sometimes has been found to perform better than its randomized counterpart (see, e.g., \cite{Saha10}), establishing its iteration complexity in a general multi-block nonsmooth setting is challenging \cite{nestrov12}}. To the best of our knowledge, the iteration complexity of the BCD-type algorithm with the classic G-S update rule has not yet been characterized for multi-block nonsmooth problems, not to mention other types of deterministic coordinate selection rules such as G-So, E-C or MBI. {Further, there has been no iteration complexity analysis for the classic BCM iteration \eqref{eq:GSperblock} when the number of variable blocks is more than two (i.e., $K\ge 3$).}

{In this paper, we provide a unified iteration complexity analysis for $K$-block BCD-type algorithm by utilizing the BSUM framework \cite{Razaviyayn12SUM}. Our result covers many different BCD-type algorithms such as BCM, BCPG, and BCGD under a number of deterministic coordinate update rules. First, for a broad class of nonsmooth convex problems, we show that the BSUM algorithm achieves a global sublinear convergence rate of ${\cal{O}}({1}/{r})$, provided that {\it each subproblem} is strongly convex. Second, when the number of variable blocks is two, we establish an improved ${\cal{O}}({1}/{r^2})$ rate for a particular version of the BSUM algorithm, without the strong convexity of the subproblems, or the gradient Lipschitz continuity of one of the subproblems.   Third, for the BCM algorithm \eqref{eq:GSperblock}, we show the global convergence rate of  ${\cal{O}}({1}/{r})$ without the per-block strong convexity assumption. The main results of this paper are summarized in the following table\footnote{We have used the following abbreviations: NS={\bf N}on{\bf s}mooth, C={\bf C}onstrained, K={\bf K}-block, BSC={\bf B}lock-wise {\bf S}trongly {\bf C}onvex, G-So={\bf G}auss-{\bf So}uthwell, G-S={\bf G}auss-{\bf S}eidel, E-C={\bf E}ssentially {\bf C}yclic, MBI={\bf M}aximum {\bf B}lock {\bf I}mprovement. The notion of {\it valid upper-bound} as well as the function $u_k$ will be introduced in Section 2. }}.
\begin{table}
\caption{\small {Summary of the Results}}\vspace*{-0.1cm}
\begin{center}
{\small
\begin{tabular}{|c |c |c |c| c| }
\hline
{\bf Method }& {\bf Update Rule}& {\bf Problem} & {\bf Assumptions} & {\bf Rate}\\
\hline
\hline
{\bf BSUM} & {\bf G-S/E-C}& NS+C+K& {$u_k$} valid upper-bound & ${\cal{O}}({1}/{r})$\\
\hline
{\bf BSUM} & {\bf G-So/MBI}& NS+C+K& {$u_k$} valid upper-bound, {$h$} Lipchitz & ${\cal{O}}({1}/{r})$\\
\hline
{{\bf BSUM}} & {\bf G-S}& NS+C+2& {$u_1$} valid upper-bound without BSC, $u_2=g$ & ${\cal{O}}({1}/{r})$\\
 \hline
 {{\bf BSUM}} & N/A & NS+C+1& {$u_1$} valid upper-bound without BSC & ${\cal{O}}({1}/{r})$\\
\hline
\hline
{\bf BCM} & {\bf MBI}& NS+C+K& {$h$} Lipchitz, without BSC & ${\cal{O}}({1}/{r})$\\
\hline
{\bf {BCM}} & {\bf G-S/E-C}& NS+C+K& $u_k=g$, without BSC & ${\cal{O}}({1}/{r})$\\
\hline
\hline
{\bf Accelerated BSUM} & {\bf G-S}& NS+C+2& {$u_1$} valid upper-bound, $u_2=g$, BSC & ${\cal{O}}({1}/{r^2})$\\
 \hline
\end{tabular} } \label{tableSymbols}
\end{center}
\vspace*{-0.1cm}
\end{table}

{\bf Notations:} For a given matrix $A$, we use $A[i,j]$ to denote its $(i,j)$th element. For a symmetric matrix $A$ use $\rho_{\max}(A)$ to denote its spectral norm. For a given vector $x$, we use $x[j]$ to denote its $j$th component; use $\|x\|$ to denote its $\ell_2$ norm.  We use $I_{X}(\cdot)$ to denote the indicator function for a given set $X$, i.e., $I_{X}(y)=1$ if $y\in X$, and $I_{X}(y)=\infty$ if $y\notin X$.  Let $x_{-k}$ denote the vector $x$ with $x_k$ removed. For a given function $f(x_1,\cdots,x_K)$ which contains $K$ block variables, we use $\nabla_k f(x_1,\cdots, X_k)$ to denote the partial gradient with respect to its $k$th block variable. We use $\partial f$ to denote a subgradient of a function $f$. For a given convex nonsmooth function $\ell(\cdot)$, we define the proximity operator $\prox_{\ell}(\cdot):\mathbb{R}^n\mapsto \mathbb{R}^n$ as
\[
\prox^{\beta}_{\ell}(\vx)={\argmin_{u\in\mathbb{R}^n}}\;\;\ell(u)+\frac\beta2\|\vx-u\|^2.
\]
Similarly, for a given convex set $X$, the projection operator  ${\rm proj}_{X}(\cdot):\mathbb{R}^n\mapsto X$ is defined as
\[
{\rm proj}_{X}(\vx)={\argmin_{u\in X}}\;\;\frac12\|\vx-u\|^2.
\]

\newsection{The BSUM Algorithm and Preliminaries}

\subsection{The BSUM Algorithm}
In this paper, we consider a family of block coordinate descent methods (BCD) for solving problem \eqref{eq:bcd_problem}. The family of the algorithms we consider falls in the general category of block successive upper-bound minimization (BSUM) method, in which certain {\it approximate version} of the objective function is optimized one block variable at a time, while fixing the rest of the block variables \cite{Razaviyayn12SUM}. In particular, at iteration $r+1$, we first pick an index set $\cC^{r+1}\subseteq\{1,\cdots,K\}$. Then the $k$th block variable is updated by
\begin{align} \label{eq:BSUM}
x^{r+1}_k
\left\{ \begin{array}{ll}\in \min_{x_k\in X_k}\; u_k\left(x_k; x^{r+1}_{1},\cdots, x^{r+1}_{k-1}, x^{r}_{k},\cdots, x^r_{K}\right)+h_k(x_k), & \mbox{if}\; k\in\cC^{r+1};\\
=x^{r}_k, &\mbox{if}\; k\notin\cC^{r+1},
\end{array}\right.
\end{align}
where $u_k(\cdot; x^{r+1}_{1},\cdots, x^{r+1}_{k-1}, x^{r}_{k},\cdots, x^r_{K})$ is an approximation of $g(x)$ at a given iterate $(x^{r+1}_{1},\cdots, x^{r+1}_{k-1}, x^{r}_{k},\cdots, x^r_{K})$.  We will see shortly that by properly specifying the approximation function $u_k(\cdot)$ as well as the index set $\cC^{r+1}$, we can recover many popular BCD-type algorithms such as the BCM, the BCGD, the BCPG methods and so on.

To simplify notations, let us define a set of auxiliary variables
\begin{align}
w^r_k&:=[x^{r}_1,\cdots, x^{r}_{k-1}, x^{r-1}_{k}, x^{r-1}_{k+1}, \cdots, x^{r-1}_K], \ k=1,\cdots, K,\nonumber\\
w^r_{-k}&:=[x^{r}_1,\cdots, x^{r}_{k-1}, x^{r-1}_{k+1}, \cdots, x^{r-1}_K], \ k=1,\cdots, K,\nonumber\\
x_{-k}&:=[x_1,\cdots, x_{k-1}, x_{k+1}, \cdots, x_K]. \nonumber
\end{align}
Clearly we have $w^{r}_{K+1}:=x^r, \; w^{r}_1:=x^{r-1}$.
Moreover, at each iteration $r+1$, define a set of new variables $\{\hx_k^{r+1}\}_{k=1}^{K}$ as follows
\begin{align}\label{eq:def_hatx}
\hx^{r+1}_k\in \min_{x_k\in X_k}\; u_k\left(x_k; x^{r}\right)+h_k(x_k),  \; k=1,\cdots, K.
\end{align}
Clearly $\{\hx^{r+1}_k\}_{k=1}^{K}$ represents a ``virtual" update where all variables are optimized in a Jacobi manner based on $x^r$.

The BSUM algorithm is described formally in the following table.
\begin{center}
\fbox{
\begin{minipage}{5.2in}
\smallskip
\centerline{\bf The Block Successive Upper-Bound Minimization (BSUM) Algorithm}
\smallskip
At each iteration $r+1$, pick an index set $\cC^{r+1}$; \\
\quad {\bf For} $k=1,\cdots, K$, do:
\begin{equation}
x^{r+1}_k
\left\{ \begin{array}{ll}\in \min_{x_k\in X_k}\; u_k\left(x_k; w^{r+1}_k\right)+h_k(x_k), & \mbox{if}\; k\in\cC^{r+1};\\
=x^{r}_k, &\mbox{if}\; k\notin\cC^{r+1}\nonumber
\end{array}\right..\\
\end{equation}
{\bf End For}.
\end{minipage}
}
\end{center}

In this paper, we consider four well-known block selection rules, described below:
\begin{enumerate}
\item {\it Gauss-Seidel (G-S) rule}: At each iteration $r+1$ all the indices are chosen, i.e., $\cC^{r+1}=\{1,\cdots,K\}$. Using this rule, the blocks are updated cyclically with fixed order.
\item {\it Essentially cyclic (E-C) rule}: There exists a given period $T\ge 1$ during which each index is updated at least once, i.e.,
\begin{align}
\bigcup_{i=1}^{T}\cC^{r+i}=\{1,\cdots,K\}, \; \forall~r.
\end{align}
We call this update rule a {\it period-$T$} essentially cyclic update rule. Clearly when $T=1$ we recover the G-S rule.
\item  {\it Gauss-Southwell (G-So) rule}: At each iteration $r+1$, $\cC^{r+1}$ contains a single index $k^*$ that satisfies:
\begin{align}\label{eq:G-So}
k^*\in \left\{k \;\bigg{|}\; \|\hx^{r+1}_{k}-x^{r}_{k}\|\ge q \max_{j}\|\hx^{r+1}_j-x^{r}_j\|\right\}
\end{align}
for some constant $q\in (0,\; 1]$.

\item {\it Maximum block improvement (MBI) rule}: At each iteration $r+1$, $\cC^{r+1}$ contains a single index $k^*$ that satisfies:
\begin{align}
k^*\in \arg\max_{k}-f(\hx^{r+1}_k, x^{r}_{-k}).
\end{align}
\end{enumerate}

\subsection{Main Assumptions} \label{sub:assumptions}
Suppose $f$ is a closed proper convex function in $\mathbb{R}^n$.
Let ${\rm dom}\ f$ denote the effective domain of $f$ and let
$\hbox{int}(\hbox{dom } f)$ denote the interior of ${\rm dom}\ f$.
We make the following standing assumptions regarding problem \eqref{eq:bcd_problem}:
\pn {\bf Assumption A.}
\begin{itemize}
\item [(a)] Problem \eqref{eq:bcd_problem} is a convex problem, ant its global minimum is attained. The intersection $X\cap \hbox{int}(\hbox{dom } f)$ is nonempty.
\item
[(b)] The gradient of $g(\cdot)$ is block-coordinate-wise uniformly Lipschitz continuous
\begin{align}
\|\nabla_k g([x_{-k}, x_k])-\nabla_k g([x_{-k},{x}'_k])\|\le M_k \|x_k-{x}'_k\|,\quad\forall~x_k,x_k'\in X_k, ~ \forall~x \in X, \ \forall\ k \label{eq:gk_lipchitz}
\end{align}
where $M_k>0$ is a constant. Define $M_{\max}=\max_k M_k$.

The gradient of $g(\cdot) $ is also uniformly Lipschitz continuous
\begin{align}
&\|\nabla g(x)-\nabla g(x')\|\le
M\|x-x'\|,~\quad\quad\forall~x,x'\in X\label{eq:g_lipchitz}
\end{align}
where $M>0$ is a constant.
\end{itemize}

Next we make the following assumptions regarding the approximation function $u_k(\cdot;\cdot)$ in \eqref{eq:BSUM}.
\pn {\bf Assumption B.}
\begin{itemize}
\item [(a)]  $u_k(x_k; x)= g(x), \quad \forall\; x\in {X}, \ \forall\; k,$
\item [(b)] $u_k(v_k; x) \geq g(v_k,x_{-k}),\quad\; \forall\; v_k \in {X}_k, \ \forall\; x \in{X}, \ \forall\; k,$
\item [(c)] $\nabla u_k(x_k;x)= \nabla_{k}g(x), \quad\; \forall\; x\in X, \; \forall\; k, $
\item [(d)] $u_k(v_k; x)$ is continuous in $v_k$ and $x$. Further, for any given $x$,  it is strongly convex in $v_k$
$$u_k(v_k; x)\ge u_k(\hat{v}_k; x)+\langle\nabla u_k(\hat{v}_k; x),v_k-\hat{v}_k\rangle
+\frac{\gamma_k}{2}\|v_k-\hat{v}_k\|^2,\ \forall~v_k, \ \hat{v}_k\in X_k, \ \forall~x\in X$$
where $\gamma_k>0$ is independent of the choice of $x$.
\item [(e)] For any given $x$, $u_k(v_k; x)$ has Lipschitz continuous gradient, that is
\begin{align}\label{eq:uk_lipchitz}
\|\nabla u_k(v_k; x)-\nabla u_k(\hat{v}_k; x)\|\le L_k\|v_k-\hat{v}_k\|,\ \forall\ \hat{v}_k,\ v_k\in X_k, \ \forall
\ k, \ \forall~x\in X,
\end{align}
where $L_k>0$ is some constant.  Further, we have
\begin{align}\label{eq:uk_lipchitz2}
\|\nabla u_k(v_k; x)-\nabla u_k({v}_k; y)\|\le G_k\|x-y\|,\ \forall\ v_k\in X_k, \ \forall
\ k, \ \forall~x, y\in X.
\end{align}
Define $L_{\rm max}:=\max_{k}L_k$; $G_{\rm max}:=\max_{k}G_k$.

\end{itemize}
We refer to the $u_k$'s that satisfy Assumption B as a {\it valid upper-bound}.

A few remarks are in order regarding to the assumptions made above.

First of all, Assumption B indicates that for any given $x$, each $u_k(\cdot; x)$ is a locally tight upper bound
for $g(x)$. When the approximation function is chosen as the original function $g(x)$, then we recover the classic BCM algorithm; cf.~\eqref{eq:GSperblock}. In many practical applications especially nonsmooth problems,
minimizing the approximation functions often leads to much simpler subproblems than directly minimizing
the original function; see e.g., \cite{HeLiaoHanYang2002,WangYuan2012, zhang11primaldual, Yang10ADMM, hong15busmm_spm}. For example, if $h_k(\cdot)=0$ for all $k$, and $u_k$ takes the following form
\begin{align}\label{eq:bcgd}
u_k(x_k; w_k^{r+1})=g(w_k^{r+1})+\langle \nabla_k g(w_k^{r+1}), x_k-x^r_k \rangle+\frac{M_k}{2}\|x_k-x^r_k\|^2,
\end{align}
then we recover the well known BCGD method \cite{Beck13,luo93errorbound:10.1007/BF02096261, nestrov12}, in which $x_k$ is updated by
\begin{align}
x^{r+1}_k=\proj_{X_k}\left[x^{r}_k-\frac{1}{M_k}\nabla_k g(w_k^{r+1})\right].
\end{align}
When the nonsmooth components $h_k$'s are present, the above choice of $u_k(\cdot;\cdot)$ in \eqref{eq:bcgd} leads to the so-called BCPG method \cite{Razaviyayn12SUM, Combettes09, zhang13linear}, in which $x_k$ is updated by \begin{align}
x^{r+1}_k=\prox^{M_k}_{h_k+I_{X_k}}\left[x^{r}_k-\frac{1}{M_k}\nabla_k g(w_k^{r+1})\right].
\end{align}
For other possible choices of the approximation function, we refer the readers to \cite{Razaviyayn12SUM, Mairal13}.

Secondly, the strong convexity requirement on $u_k(\cdot;x)$ in Assumption B(d) is quite mild,
see the examples given in the previous remark (e.g., BCPG and BCGD). When $u_k$ is chosen as the original function $g(x)$, this requirement says that $g(x)$ must be {\it block-wise strongly convex} (BSC). The BSC condition is in fact satisfied in many practical engineering problems. The following are two interesting examples.

{\begin{example}\label{ex:wireless}
Consider the rate maximization problem in an uplink wireless communication network, where $K$ users transmit to a single base station (BS) in the network. Suppose each user has $n_t$ transmit antennas, and the BS has $n_r$ receive antennas. Let $C_k\in \mathbb{R}^{n_t\times n_t}$ denote user $k$'s transmit covariance matrix, $P_k$ denote the maximum transmit power for user $k$, and $H_k\in\mathbb{R}^{n_r\times n_t}$ denote the channel matrix between user $k$ and the BS. Then the uplink channel capacity optimization problem is given by the following convex program \cite{cover05, yu04}
 \begin{align}
 \min_{\{C_k\}_{k=1}^{K}}\; -\log\det\left|\sum_{k=1}^{K}{H_k  C_k H^T_k} +I_{n_r}\right|, \quad \st\quad C_k\succeq 0,\;  \trace{[C_k]}\le P_k, \ k=1,\cdots, K, \label{eq:MAC}
 \end{align}
 where $I_{n_r}$ is the $n_r\times n_r$ identity matrix.  The celebrated iterative water-filling algorithm (IWFA) \cite{yu04} for solving this problem is simply the BSUM algorithm with exact block minimization (i.e. the BCM algorithm) and G-S update rule. It is easy to verify that when $n_t\le n_r$ (i.e., the number of transmit antenna is smaller than that of the receive antenna), and when the channels are generated randomly, then with probability one $H^T_k H_k$ is of full rank, implying that the BSC condition is satisfied. We note that there has been no iteration complexity analysis of the IWFA algorithm for any type of block selection rules.
\end{example}}

\begin{example}
Consider the following LASSO problem:
 \begin{align*}
    \min\|Ax-b\|^2+\lambda \|x\|_1,
    \end{align*}
 where $A\in\mathbb{R}^{M\times K}, \; b\in\mathbb{R}^{M}$, and $x=[x_1,\cdots, x_K]^T$, with $x_k\in\mathbb{R}$ for all $k$. That is, each block consists of a single scalar variable.  In this case, as long as none of $A$'s columns are zero (in which case we simply remove that column and the corresponding block variable), the problem satisfies the BSC property. Prior to our work, there is no iteration complexity analysis for applying  BCD with deterministic block selection rules such as G-S and E-C for LASSO (with general data matrix $A$).
\end{example}

Note that the BSC property, or more generally the strong convexity assumption on the approximate function $u_k$, is reasonable as it ensures that each step of the BSUM algorithm is well-defined and has a unique solution. {In the ensuing analysis of the BSUM algorithm, we  assume that either the BSC property holds true, or $u_k$ is a valid upper-bound. Later in Sections 4 - 6, we will consider the case where the BSC assumption is absent. }

\newsection{Convergence Analysis for BSUM} \label{sec:convergence}
In this section, we show that under assumptions A and B, the BSUM algorithm with flexible update rules achieves global sublinear rate of convergence. 

Let us define $X^*$ as the optimal solution set, and let ${x}^*\in X^*$ be one of the optimal solutions. For the BSUM algorithm, define the optimality gap as
\begin{align}
\Delta^r:&=f(x^{r})-f({x}^*).
\end{align}

Despite the generality of the BSUM algorithm, our analysis of BSUM only consists of three simple steps: S1) estimate the amount of successive decrease of the optimality gaps; S2) estimate the cost yet to be minimized after each iteration; S3) estimate the rate of convergence.

We first characterize the successive difference of the optimality gaps before and after one iteration of the BSUM algorithm, with different update rules.
\begin{lemma}\label{lm:p-descent}{\rm \bf (Sufficient Descent)}
Suppose Assumption A and Assumption B hold. Then
\begin{enumerate}
\item For BSUM with either G-S rule or the E-C rule, we have that for all $r\ge 1$
\begin{equation}\label{eq:descent-GS}
\Delta^{r}-\Delta^{r+1}\ge \sum_{k=1}^{K}\frac{\gamma_k}{2}\|x_k^{r} - x_k^{r+1}\|^2\ge {\gamma}\|x^{r} - x^{r+1}\|^2,
\end{equation}
where the constant $\gamma:=\frac{1}{2}\min_k\gamma_k>0$.
\item For BSUM with G-So rule and MBI rule, we have that for all $r\ge 1$
\begin{equation}\label{eq:descent-GSo}
\Delta^{r}-\Delta^{r+1}\ge \frac{c_1}{K} {\gamma}\|x^r - \hx^{r+1}\|^2,
\end{equation}
where the constant $\gamma:=\frac{1}{2}\min_k\gamma_k>0$; For G-So rule, $c_1=q$, and for MBI rule, $c_1=1$.
\end{enumerate}
\end{lemma}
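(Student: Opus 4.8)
The plan is to exploit the upper-bound property (Assumption B(b)) together with the strong convexity of each $u_k$ (Assumption B(d)) to convert each block update into a guaranteed decrease of the objective, and then sum or select these decreases according to the update rule. For the G-S / E-C case, I would proceed block by block. Recall $w^{r+1}_{k+1} = [x^{r+1}_1,\ldots,x^{r+1}_k,x^r_{k+1},\ldots,x^r_K]$ differs from $w^{r+1}_k$ only in the $k$th block. Since $x^{r+1}_k$ is the minimizer of $u_k(\cdot;w^{r+1}_k)+h_k(\cdot)$ over $X_k$, and since $u_k(\cdot;w^{r+1}_k)$ is $\gamma_k$-strongly convex by B(d) while $h_k$ is convex, the standard strong-convexity-at-the-minimizer inequality gives
\[
u_k(x^r_k;w^{r+1}_k)+h_k(x^r_k)\ \ge\ u_k(x^{r+1}_k;w^{r+1}_k)+h_k(x^{r+1}_k)+\frac{\gamma_k}{2}\|x^r_k-x^{r+1}_k\|^2.
\]
Now apply B(a): $u_k(x^r_k;w^{r+1}_k)=g(w^{r+1}_k)$, so the left side equals $g(w^{r+1}_k)+h_k(x^r_k)$. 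Apply B(b): $u_k(x^{r+1}_k;w^{r+1}_k)\ge g(x^{r+1}_k,(w^{r+1}_k)_{-k}) = g(w^{r+1}_{k+1})$. Combining,
\[
g(w^{r+1}_k)+h_k(x^r_k)\ \ge\ g(w^{r+1}_{k+1})+h_k(x^{r+1}_k)+\frac{\gamma_k}{2}\|x^r_k-x^{r+1}_k\|^2.
\]
Adding $\sum_{j\ne k}h_j(\cdot)$ (whose relevant arguments are unchanged between $w^{r+1}_k$ and $w^{r+1}_{k+1}$) to both sides turns this into $f(w^{r+1}_k)\ge f(w^{r+1}_{k+1})+\frac{\gamma_k}{2}\|x^r_k-x^{r+1}_k\|^2$. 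Telescoping over $k=1,\ldots,K$ and using $w^{r+1}_1=x^r$, $w^{r+1}_{K+1}=x^{r+1}$ yields $f(x^r)-f(x^{r+1})\ge\sum_k\frac{\gamma_k}{2}\|x^r_k-x^{r+1}_k\|^2$, which is exactly \eqref{eq:descent-GS}; the second inequality there is immediate since $\min_k\gamma_k\le\gamma_k$ and $\|x^r-x^{r+1}\|^2=\sum_k\|x^r_k-x^{r+1}_k\|^2$. For the E-C rule nothing changes: blocks not in $\cC^{r+1}$ simply contribute a zero term (since $x^{r+1}_k=x^r_k$), and the telescoping argument is unaffected by the ordering or the subset chosen.

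For the G-So / MBI case only one block $k^*$ is updated, so the same single-block inequality gives $\Delta^r-\Delta^{r+1}=f(x^r)-f(x^{r+1})\ge\frac{\gamma_{k^*}}{2}\|x^r_{k^*}-x^{r+1}_{k^*}\|^2\ge\gamma\|x^r_{k^*}-x^{r+1}_{k^*}\|^2$. Here $x^{r+1}_{k^*}$ coincides with the virtual update $\hx^{r+1}_{k^*}$ of \eqref{eq:def_hatx}, since $w^{r+1}_{k^*}=x^r$ when $k^*$ is the only updated block (the earlier blocks were not touched). So the decrease is at least $\gamma\|x^r_{k^*}-\hx^{r+1}_{k^*}\|^2$. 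For the G-So rule, the selection criterion \eqref{eq:G-So} gives $\|\hx^{r+1}_{k^*}-x^r_{k^*}\|\ge q\max_j\|\hx^{r+1}_j-x^r_j\|$, and since $\|x^r-\hx^{r+1}\|^2=\sum_j\|\hx^{r+1}_j-x^r_j\|^2\le K\max_j\|\hx^{r+1}_j-x^r_j\|^2\le \frac{K}{q^2}\|\hx^{r+1}_{k^*}-x^r_{k^*}\|^2$ — wait, more simply $\|\hx^{r+1}_{k^*}-x^r_{k^*}\|^2\ge q^2\max_j\|\cdot\|^2\ge \frac{q^2}{K}\|x^r-\hx^{r+1}\|^2$ (using $\max\ge\mathrm{average}$ and $q\le1$, so in fact $\ge\frac{q}{K}\|x^r-\hx^{r+1}\|^2$). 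This yields \eqref{eq:descent-GSo} with $c_1=q$. For MBI, $k^*$ maximizes $-f(\hx^{r+1}_k,x^r_{-k})$, hence $f(x^r)-f(\hx^{r+1}_{k^*},x^r_{-k^*})\ge f(x^r)-f(\hx^{r+1}_j,x^r_{-j})$ for every $j$; averaging the single-block descent bound over $j$ and noting each term is $\ge\frac{\gamma_j}{2}\|x^r_j-\hx^{r+1}_j\|^2$ gives $\Delta^r-\Delta^{r+1}\ge\frac1K\sum_j\frac{\gamma_j}{2}\|x^r_j-\hx^{r+1}_j\|^2\ge\frac{\gamma}{K}\|x^r-\hx^{r+1}\|^2$, i.e. $c_1=1$.

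I do not expect a genuine obstacle here — the lemma is essentially the sufficient-decrease bookkeeping that makes BSUM work, and every ingredient is handed to us by Assumptions A and B. The only points requiring a little care are: (i) correctly invoking B(b) so that the per-block bound reads $g(w^{r+1}_{k+1})$ rather than something weaker; (ii) the telescoping identity $f(w^{r+1}_k)-f(w^{r+1}_{k+1})\ge\frac{\gamma_k}{2}\|x^r_k-x^{r+1}_k\|^2$, which needs the observation that the $h_j$ terms for $j\ne k$ are inert in that single step; and (iii) for the single-block rules, the identification $x^{r+1}_{k^*}=\hx^{r+1}_{k^*}$, valid precisely because no earlier block was modified in that iteration. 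The combinatorial step of passing from the single updated block to the full-vector quantity $\|x^r-\hx^{r+1}\|$ via $\max\ge\mathrm{average}$ is the only place a constant ($\tfrac1K$, and $q$ for G-So) enters, and it is elementary.
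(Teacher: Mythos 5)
Your argument is correct and follows essentially the same route as the paper's: the per-block descent $f(w^{r+1}_k)-f(w^{r+1}_{k+1})\ge\frac{\gamma_k}{2}\|x^r_k-x^{r+1}_k\|^2$ obtained from B(a), B(b) and the strong-convexity-at-the-minimizer inequality (which the paper merely unfolds into the gradient inequality of B(d), a subgradient of $h_k$, and first-order optimality), followed by telescoping for G-S/E-C; and, for the single-block rules, the identification $x^{r+1}_{k^*}=\hx^{r+1}_{k^*}$ plus the max-versus-average step, with your explicit MBI averaging argument filling in what the paper dismisses as ``similar steps.'' The one place to be careful is the G-So constant: squaring \eqref{eq:G-So} yields $\|\hx^{r+1}_{k^*}-x^r_{k^*}\|^2\ge \frac{q^2}{K}\|x^r-\hx^{r+1}\|^2$, and since $q\le 1$ we have $q^2\le q$, so your parenthetical ``so in fact $\ge\frac{q}{K}\|x^r-\hx^{r+1}\|^2$'' runs the inequality the wrong way; the honest constant in \eqref{eq:descent-GSo} is $c_1=q^2$. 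The paper's own proof makes the identical jump (passing from $\frac{\gamma_k}{2}\|x^r_k-\hx^{r+1}_k\|^2$ directly to $\frac{q\min_j\gamma_j}{2K}\sum_j\|x^r_j-\hx^{r+1}_j\|^2$), so this is an imprecision you inherited rather than introduced, and it is immaterial to the $O(1/r)$ rate.
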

\proof We first show part (1) of the proof. Suppose that $k\notin\cC^{r+1}$, then we have the following trivial inequality
\begin{align}
f(w^{r+1}_k)- f(w^{r+1}_{k+1})\ge \frac{\gamma_k}{2}\|\vx^{r+1}_k-\vx^r_k\|^2
\end{align}
as both sides of the inequality are zero.

Suppose $k\in\cC^{r+1}$. Then using Assumption B, we have that{
\begin{align}\label{eq:sufficient_descent}
f(w^{r+1}_k)- f(w^{r+1}_{k+1})&\ge u_k(\vx^{r}_{k} ; w^{r+1}_k)+h_k(x^r_k)-\left(u_k(\vx^{r+1}_{k} ; w^{r+1}_k)+h_k(x^{r+1}_k)\right)\nonumber\\
&\ge \langle \nabla u_k(\vx^{r+1}_{k} ; w^{r+1}_k), x^r_k-x^{r+1}_k\rangle+h_k(x^r_k)-h_k(x^{r+1}_k)+\frac{\gamma_k}{2}\|\vx^{r+1}_k-\vx^r_k\|^2\nonumber\\
&\ge \langle \nabla u_k(\vx^{r+1}_{k} ; w^{r+1}_k)+\zeta_k^{r+1}, x^r_k-x^{r+1}_k\rangle+\frac{\gamma_k}{2}\|\vx^{r+1}_k-\vx^r_k\|^2\nonumber\\
&\ge \frac{\gamma_k}{2}\|\vx^{r+1}_k-\vx^r_k\|^2
\end{align}
where the first inequality is due to Assumption B(a)--B(b); the second inequality is due to Assumption B(d); in the third inequality we have defined $\zeta_k^{r+1}\in\partial h_k(\vx^{r+1}_{k})$; the last inequality is due to the fact that $x_k^{r+1}$ is the optimal solution for the strongly convex problem
$${\rm arg}\!\min_{x_k\in X_k}u_k(x_k; w^{r+1}_k)+h_k(x_k).$$}
Summing over $k$, we have
\begin{align}
f(\vx^r)- f(\vx^{r+1})\ge \gamma\|x^r - x^{r+1}\|^2,
\end{align}
where $\gamma:=\frac{1}{2}\min_{k}\gamma_k$. 

We then show part (2) of the claim. Suppose $k\in\cC^{r+1}$, then we have the following series of inequalities for the G-So rule
\begin{align}
f(x^r)- f(x^{r+1})
&=f(x^r)-f(x^r_{-k},\hat{x}^{r+1}_k)\nonumber\\
&\ge u_k(x^r_k; x^r)+h_k(x^r_k)-u_k(\hx^{r+1}_k; x^r)-h_k(\hat{x}^{r+1}_k)\nonumber\\
&\ge \frac{1}{2}\gamma_k\|x^r_k-\hat{x}^{r+1}_k\|^2\nonumber\\
&\ge \frac{q\min_{j}\gamma_j}{2K}\sum_{j=1}^{K}\|x^r_j-\hat{x}^{r+1}_j\|^2\nonumber\\
&= \frac{q}{K}{\gamma}\|\vx^r - \hx^{r+1}\|^2.
\end{align}
Similar steps lead to the result for the MBI rule.
\QED

Next we show the second step of the proof, which estimates the gap yet to be minimized after each iteration of the algorithm. Let us define the following constants:
\begin{align}\label{eq:Ra}
\begin{split}
&R:=\max_{x\in X}\max_{x^*\in X^*}\left\{\|x-x^*\|\ : \ f(x)\le f(x^1) \right\}, \quad
Q:=\max_{x\in X}\left\{\|\nabla g(x)\|:\ f(x)\le f(x^1)\right\}.
\end{split}
\end{align}
{When assuming that the level set $\{x: f(x)\le f(x^1)\}$ is compact, then all the above constants are finite.} Clearly we have
\begin{align}
\|x^r-x^*\|\le R,\quad \|\nabla g(x^r)\|\le Q, \ \forall\ r=1,\cdots.
\end{align}

Occasionally we need to further make the assumption that the nonsmooth part $h(x)$ is Lipchitz continuous:
\begin{align}
\|h(x)-h(y)\|\le L_h\|x-y\|,\ \forall\ x, y\in X\label{eq:h_lip},
\end{align}
with some $L_h>0$. Note that such assumption is satisfied by most of the popular nonsmooth regularizers such as the $\ell_1$ norm, the $\ell_2$ norm and so on. Also note that even with this assumption, our considered problem is still a {\it constrained} one, as the convex constraints $x_k\in X_k$ have not been moved to the objective as nonsmooth indicator functions.

\begin{lemma}\label{lemma:clost_to_go} {\rm \bf (Cost-to-go Estimate)}
Suppose Assumptions A and B are satisfied. Then
\begin{enumerate}
\item For the BSUM with G-S update rule, we have
\begin{align*}
(\Delta^{r+1})^2\le R^2 K G^2_{\max}\|x^{r+1}-x^{r}\|^2,\ \forall\ x^*\in X^*.
\end{align*}
\item For the BSUM with period-$T$ E-C update rule, we have
\begin{align*}
(\Delta^{r+T})^2\le T R^2 K G^2_{\max}\sum_{t=1}^{T}\|x^{r+t}-x^{r+t-1}\|^2,\ \forall\ x^*\in X^*.
\end{align*}
\item For the BSUM with G-So and MBI rules, further assume that $h(\cdot)$ is Lipchitz continuous (cf. \eqref{eq:h_lip}). Then we have
\begin{align*}
\Delta^{r}&=f(x^{r})-f(x^*)\le 2\left((Q+L_h)^2+L^2_{\max} {K} R^2\right)\|\hx^{r+1}-x^{r}\|^2,\ \forall\ x^*\in X^*.
\end{align*}
\end{enumerate}
\end{lemma}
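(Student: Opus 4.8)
The plan is to derive all three bounds in the same manner: upper-bound the optimality gap by convexity of $f=g+\sum_k h_k$, insert the appropriate surrogate iterate, cancel a first-order term using the optimality condition of the subproblem that produced it, and control the residual gradient mismatch through Assumption~B(c) and the Lipschitz bounds \eqref{eq:uk_lipchitz}--\eqref{eq:uk_lipchitz2}. Whenever a block $k$ is updated at an iteration $t$, the subproblem in \eqref{eq:BSUM} is the minimization of the (strongly) convex function $u_k(\cdot;w_k^t)+h_k(\cdot)$ over $X_k$, so there exists $\xi_k^t\in\partial h_k(x_k^t)$ with $\langle\nabla u_k(x_k^t;w_k^t)+\xi_k^t,\,x_k-x_k^t\rangle\ge 0$ for all $x_k\in X_k$; I will always apply this at $x_k=x_k^*$, and I will repeatedly rewrite $\nabla_k g$ at the current iterate as $\nabla u_k$ of a fixed first argument using B(c). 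For Part~(1) (G--S rule): start from $\Delta^{r+1}\le\sum_{k}\big(\langle\nabla_k g(x^{r+1}),x_k^{r+1}-x_k^*\rangle+h_k(x_k^{r+1})-h_k(x_k^*)\big)$, bound $h_k(x_k^{r+1})-h_k(x_k^*)\le\langle\xi_k^{r+1},x_k^{r+1}-x_k^*\rangle$ and subtract the optimality inequality, leaving $\langle\nabla_k g(x^{r+1})-\nabla u_k(x_k^{r+1};w_k^{r+1}),x_k^{r+1}-x_k^*\rangle$; since $\nabla_k g(x^{r+1})=\nabla u_k(x_k^{r+1};x^{r+1})$ by B(c), \eqref{eq:uk_lipchitz2} makes the first factor at most $G_k\|x^{r+1}-w_k^{r+1}\|\le G_{\max}\|x^{r+1}-x^r\|$ (the last step because $x^{r+1}$ and $w_k^{r+1}$ coincide on blocks $1,\ldots,k-1$). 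Summing and using Cauchy--Schwarz with $\sum_k\|x_k^{r+1}-x_k^*\|\le\sqrt K\|x^{r+1}-x^*\|\le\sqrt K R$ gives $\Delta^{r+1}\le\sqrt K\,R\,G_{\max}\|x^{r+1}-x^r\|$; squaring finishes Part~(1).

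For Part~(2) (period-$T$ E--C rule), run the same computation at $x^{r+T}$, but for each $k$ let $t_k\in\{r+1,\ldots,r+T\}$ be the \emph{last} iteration of the window at which block $k$ is updated (it exists by the E--C assumption), so $x_k^{r+T}=x_k^{t_k}$ and the subgradient $\xi_k^{t_k}\in\partial h_k(x_k^{r+T})$ and the optimality inequality from iteration $t_k$ are available. The argument of Part~(1) then gives $\langle\nabla_k g(x^{r+T})+\xi_k^{t_k},x_k^{r+T}-x_k^*\rangle\le G_k\|x^{r+T}-w_k^{t_k}\|\,\|x_k^{r+T}-x_k^*\|$, and a telescoping bound through $x^{t_k}$, namely $\|x^{r+T}-w_k^{t_k}\|\le\|x^{t_k}-w_k^{t_k}\|+\sum_{s=t_k+1}^{r+T}\|x^s-x^{s-1}\|\le\sum_{s=r+1}^{r+T}\|x^s-x^{s-1}\|$ (using $\|x^{t_k}-w_k^{t_k}\|\le\|x^{t_k}-x^{t_k-1}\|$ and $t_k\ge r+1$), controls the first factor. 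Summing over $k$ and Cauchy--Schwarz yield $\Delta^{r+T}\le\sqrt K\,R\,G_{\max}\sum_{t=1}^T\|x^{r+t}-x^{r+t-1}\|$, and after squaring, the inequality $\big(\sum_{t=1}^T a_t\big)^2\le T\sum_{t=1}^T a_t^2$ produces the claim.

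For Part~(3) (G--So/MBI with $h$ Lipschitz), the bound is in terms of the virtual Jacobi iterate $\hx^{r+1}$ of \eqref{eq:def_hatx}, which is the delicate case because these rules move only one block per iteration, so $\Delta^r$ cannot be tied to the actual step. Write $\Delta^r\le\sum_k\langle\nabla_k g(x^r),x_k^r-x_k^*\rangle+\big(h(x^r)-h(\hx^{r+1})\big)+\sum_k\big(h_k(\hx^{r+1}_k)-h_k(x_k^*)\big)$, split $x_k^r-x_k^*=(x_k^r-\hx^{r+1}_k)+(\hx^{r+1}_k-x_k^*)$, and regroup. The middle term is $\le L_h\|x^r-\hx^{r+1}\|$ by \eqref{eq:h_lip}. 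The terms $\langle\nabla_k g(x^r),\hx^{r+1}_k-x_k^*\rangle+h_k(\hx^{r+1}_k)-h_k(x_k^*)$ are handled exactly as in Part~(1), now using the optimality condition of \eqref{eq:def_hatx} (whose ``$w$''-slot equals $x^r$), $\nabla_k g(x^r)=\nabla u_k(x_k^r;x^r)$, and \eqref{eq:uk_lipchitz}, so each is $\le L_k\|x_k^r-\hx^{r+1}_k\|\,\|\hx^{r+1}_k-x_k^*\|$; the leftover terms are $\langle\nabla_k g(x^r),x_k^r-\hx^{r+1}_k\rangle\le\|\nabla_k g(x^r)\|\,\|x_k^r-\hx^{r+1}_k\|$. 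Two Cauchy--Schwarz steps collapse the sums to $\|\nabla g(x^r)\|\,\|x^r-\hx^{r+1}\|+L_h\|x^r-\hx^{r+1}\|+L_{\max}\|x^r-\hx^{r+1}\|\big(\sum_k\|\hx^{r+1}_k-x_k^*\|^2\big)^{1/2}$; using $\|\nabla g(x^r)\|\le Q$ and, crucially, $\|\hx^{r+1}_k-x_k^*\|\le R$ --- which holds because $f(\hx^{r+1}_k,x^r_{-k})\le u_k(\hx^{r+1}_k;x^r)+h_k(\hx^{r+1}_k)+\sum_{j\ne k}h_j(x^r_j)\le f(x^r)\le f(x^1)$ by B(b), the optimality of $\hx^{r+1}_k$, and B(a), so $(\hx^{r+1}_k,x^r_{-k})$ lies in the level set of \eqref{eq:Ra} and $\|\hx^{r+1}_k-x_k^*\|\le\|(\hx^{r+1}_k,x^r_{-k})-x^*\|\le R$ --- we get $\Delta^r\le\big(Q+L_h+\sqrt K\,L_{\max}R\big)\|x^r-\hx^{r+1}\|$; squaring and $(a+b)^2\le 2a^2+2b^2$ give $(\Delta^r)^2\le 2\big((Q+L_h)^2+L_{\max}^2 K R^2\big)\|\hx^{r+1}-x^r\|^2$, the asserted estimate.

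The principal obstacle is Part~(3): because G--So and MBI update a single block, the optimality gap has to be expanded block-by-block around the Jacobi virtual iterate rather than the true one, and this expansion only closes if each ``one-block-moved'' point $(\hx^{r+1}_k,x^r_{-k})$ remains in the level set $\{x:f(x)\le f(x^1)\}$ --- which is exactly where the majorization properties B(a)--B(b) enter, together with the Lipschitz assumption on $h$. Parts~(1)--(2) are routine once the per-block optimality conditions are combined with B(c) and \eqref{eq:uk_lipchitz2}; the one bookkeeping point, in Part~(2), is to keep the subgradient and optimality inequality of block $k$ from its last update $t_k$ inside the window, not from iteration $r+T$.
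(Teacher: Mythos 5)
Your proof is correct and follows essentially the same route as the paper's: the same convexity--plus--subproblem-optimality decomposition, the same use of Assumption B(c) together with \eqref{eq:uk_lipchitz2} (resp.\ \eqref{eq:uk_lipchitz}) to control the gradient mismatch, the same ``last update within the window'' index for the E-C case, and the same level-set bound (which you justify more explicitly than the paper does for $\|\hx^{r+1}_k-x^*_k\|\le R$). One small point: in part (3) you arrive at a bound on $(\Delta^r)^2$ rather than on $\Delta^r$ as the lemma literally states; this squared form is what the paper's own derivation produces and what Theorem \ref{thm:complexity} actually uses, so the missing square on the left-hand side of the statement is a typo and your version is the intended one.
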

\begin{proof}
We first show part (1) of the claim. We have the following sequence of inequalities
\begin{align}
f(x^{r+1})-f(x^*)
&=g(x^{r+1})-g(x^*)+h(x^{r+1})-h(x^*)\nonumber\\
&\le \langle\nabla g(x^{r+1}), x^{r+1}-x^*\rangle+h(x^{r+1})-h(x^*)\nonumber\\
&=\sum_{k=1}^{K}\langle \nabla_k g(x^{r+1})-\nabla u_k(x_k^{r+1};w^{r+1}_k), x^{r+1}_k-x^{*}_k\rangle\nonumber\\
&\quad\quad+\sum_{k=1}^{K}\langle \nabla u_k(x_k^{r+1};w^{r+1}_k), x^{r+1}_k-x^{*}_k\rangle+h(x^{r+1})-h(x^*)\label{eq:cost_to_go}.
\end{align}
Notice that $x^{r+1}_k$ is the optimal solution for problem: $\argmin_{x_k\in X_k}u_k(x_k; w^{r+1}_k)+h_k(x_k)$. It follows from the optimality condition of this problem that there exists some $\zeta_k^{r+1}\in \partial \left(h_k(x^{r+1}_k)\right)$ such that
\begin{align}
0&\ge\langle \nabla u_k(x^{r+1}_k; w^{r+1}_k)+ \zeta_k^{r+1}, x^{r+1}_k-x^*_k\rangle\nonumber\\
&\ge\langle\nabla u_k(x^{r+1}_k; w^{r+1}_k), x^{r+1}_k-x^*_k\rangle+h_k(x^{r+1}_k)-h_k(x^*_k),\label{eq:negative}
\end{align}
where in the last inequality we have used the definition of subgradient
\begin{align}
h_k(x_k)-h_k(v_k)\ge \langle \zeta_k^{r+1}, x_k-v_k\rangle,\;\forall \ x_k, v_k\in X_k.
\end{align}
Combining \eqref{eq:cost_to_go} and \eqref{eq:negative}, we obtain
\begin{align}
&\left(f(x^{r+1})-f(x^*)\right)^2\nonumber\\
&\stackrel{\rm (i)} \le \left(\sum_{k=1}^{K}\|\nabla_k g(x^{r+1})-\nabla u_k(x^{r+1}_k; w^{r+1}_k)\| \|x^{r+1}_k-x^{*}_k\|\right)^2\nonumber\\
&\stackrel{\rm (ii)}\le  \left(\sum_{k=1}^{K}G_k\|x^{r+1}-w^{r+1}_k\|\|x^{r+1}_k-x^{*}_k\|\right)^2\nonumber\\
& \le   R^2 K G^2_{\max}\|x^{r+1}-x^r\|^2\nonumber
\end{align}
where in $\mbox{(i)}$ we have used the Cauchy-Schwarz inequality and the Lipchitz continuity of $u_k(\cdot;\cdot)$ in \eqref{eq:uk_lipchitz}; in $\mbox{(ii)}$ we have used the Lipchitz continuity of $\nabla g(\cdot)$ in \eqref{eq:g_lipchitz},  and that $\nabla_k g(x^{r+1})=\nabla_k u_k(x_k^{r+1}; x^{r+1})$ (cf. Assumption B(c)).

Next we show part (2) of the claim. Let us define a new index set $\{r_k\}$ as follows:
\begin{align}\label{eq:r_k}
r_k:=\arg\max_t \{ x_k^{t}\ne x_k^{r+T} \}+1, \; k=1,\cdots, K.
\end{align}
That is, $r_k$ is the latest iteration index (up until $r+T$) in which the $k$th variable has been updated. From this definition we have $x^{r_k}_k=x^{r+T}_k$, for all $k$.   

We have the following sequence of inequalities
\begin{align*}
&f(x^{r+T})-f(x^*)\\
&=g(x^{r+T})-g(x^*)+\sum_{k=1}^{K}\left(h_k(x^{r_k}_k)-h_k(x^*_k)\right)\nonumber\\
&\le \langle \nabla g(x^{r+T}), x^{r+T}-x^*\rangle +\sum_{k=1}^{K}\left(h_k(x^{r_k}_k)-h_k(x^*_k)\right)\\
&\stackrel{\rm (i)}= \sum_{k=1}^{K}\bigg(\langle \nabla_k g(x^{r+T})-\nabla u_k(x^{r_k}_k; w^{r_k}_{k}), x_k^{r+T}-x_k^*\rangle + \langle\nabla u_k(x^{r_k}_k; w^{r_k}_{k}), x_k^{r_k}-x_k^{*}\rangle\bigg)\\
&\quad\quad+\sum_{k=1}^{K}\left(h_k(x^{r_k}_k)-h_k(x^*_k)\right)\\
&\stackrel{\rm (ii)}\le \sum_{k=1}^{K}\langle \nabla_k g(x^{r+T})-\nabla u_k(x^{r_k}_k; w^{r_k}_{k}), x_k^{r+T}-x_k^*\rangle 
\end{align*}
where in $\rm (i)$ we have used the fact that $x^{r+T}_k=x_k^{r_k}$, for all k; in $\rm (ii)$ we have used the optimality of $x^{r_k}_k$.
Taking the square on both sides, we obtain
\begin{align*}
&(f(x^{r+T})-f(x^*))^2\\
&\le \left(\sum_{k=1}^{K}\|\nabla_k g(x^{r+T})-\nabla u_k(x^{r_k}_k; w^{r_k}_{k})\|\|x_k^{r+T}-x_k^*\|\right)^2\\
&\le \left(\sum_{k=1}^{K}G_k \|x^{r+T}-w^{r_k}_{k}\| \|x_k^{r+T}-x_k^*\|\right)^2\\
&\le \left(\sum_{k=1}^{K}G_k \left(\|x^{r+T}-x^{r_k}\|+\|x^{r_k}-w^{r_k}_{k}\|\right) \|x_k^{r+T}-x_k^*\|\right)^2\\
&\le T K  G^2_{\max} R^2 \sum_{t=1}^{T}\|x^{r+t-1}-x^{r+t}\|^2.
\end{align*}

Finally we  show part (3) of the claim.  We have the following sequence of inequalities
\begin{align}
&f(x^{r})-f(x^*)\nonumber\\
&=g(x^r)-g(x^*)+h(x^r)-h(x^*)\nonumber\\
&\stackrel{\rm (i)}\le \langle \nabla g(x^r), x^r-x^*\rangle+L_h\|x^r-\hx^{r+1}\|+h(\hx^{r+1})-h(x^*)\nonumber\\
&=\langle \nabla g(x^r), x^r-\hx^{r+1}\rangle+\langle \nabla g(x^r), \hx^{r+1}-x^*\rangle+L_h\|x^r-\hx^{r+1}\|+h(\hx^{r+1})-h(x^*)\nonumber\\
&\le (L_h+Q)\|x^r-\hx^{r+1}\|+\sum_{k=1}^{K}\langle \nabla_k g(x^r)-\nabla u_k(\hx^{r+1}_k;x^r), \hx_k^{r+1}-x_k^*\rangle\nonumber\\
&\quad+\sum_{k=1}^{K}\; \left\langle \nabla u_k(\hx^{r+1}_k;x^t), \hx_k^{r+1}-x_k^*\right\rangle+h(\hx^{r+1})-h(x^*)\label{eq:r_bcd_intermediate}
\end{align}
where step $\rm (i)$ follows from the Lipchitz continuity assumption \eqref{eq:h_lip} as well as the convexity of $g(\cdot)$.
Similar to the proof of \eqref{eq:negative} in part (1), we can show that
\begin{align}
\sum_{k=1}^{K}\;\langle \nabla u_k(\hx^{r+1}_k;x^r), \hx_k^{r+1}-x_k^*\rangle+h(\hx^{r+1})-h(x^*)\le 0.
\end{align}
Moreover, it follows from Assumption B(c) and B(e) that
\begin{align}
&\left(\sum_{k=1}^{K}\;\left\langle \nabla_k g(x^r)-\nabla u_k(\hx^{r+1}_k;x^r), x_k^{r+1}-x_k^*\right\rangle\right)^2\nonumber\\
&=\left(\sum_{k=1}^{K}\;\left\langle \nabla u_k(x^r_k; x^r)-\nabla u_k(\hx^{r+1}_k;x^r), x_k^{r+1}-x_k^*\right\rangle\right)^2\nonumber\\
&\le K\sum_{k=1}^{K}L^2_k\|x^r_k-\hx^{r+1}_k\|^2\|x_k^{r+1}-x_k^*\|^2\nonumber\\
&\le K L^2_{\max}\|x^r-\hx^{r+1}\|^2R^2.\label{eq:key_complexity_rbcd}
\end{align}
Putting the above three inequalities together, we have
\begin{align}
f(x^{r})-f(x^*)\le 2\left((Q+L_h)^2+KL^2_{\max}R^2\right)\|x^r-\hx^{r+1}\|^2.
\end{align}
This completes the proof. \QED
\end{proof}

We are now ready to prove the $\mathcal{O}\left({1}/{r}\right)$  iteration complexity for the BSUM algorithm when applied to problem \eqref{eq:bcd_problem}. Our results below are more general than the recent analysis on the iteration complexity for BCD-type algorithms. The generality of our results can be seen from several fronts: 1) The family of algorithms we analyze is broad; it includes the classic BCD, the BCGD method, the BCPG methods as well as their variants based on different coordinate selection rules as special cases, while the existing works only focus on one particular algorithm; 2) When the coordinates are updated in a G-S fashion, our result covers the general multi-block nonsmooth case, where $h_k(x)$ can take any proper closed convex nonsmooth function, while existing works only cover some special cases \cite{Beck13, Saha10, Beck13b}; 3) When the coordinates are updated using other update rules such as G-So, MBI, E-C fashion, our convergence results appear to be new.

\begin{theorem}\label{thm:complexity}
Suppose Assumption A(a) and Assumption B hold true. We have the following.
\begin{enumerate}
\item Let $\{x^r\}$ be the sequence generated by the BSUM algorithm with G-S rule. Then we have
\begin{align}
\Delta^r=f(x^r)-f^*\le \frac{c_1}{\sigma_1}\frac{1}{r}, \; \forall~r\ge 1,
\end{align}
where the constants are given below
\begin{align}
{c}_1&=\max\{4\sigma_1-2, f(x^1)-f^*,2\}\nonumber,\\
\sigma_1&=\frac{\gamma}{KG^2_{\max}R^2},\quad
\end{align}
\item Let $\{x^r\}$ be the sequence generated by the BSUM algorithm with E-C rule. Then we have
\begin{align}
\Delta^r=f(x^r)-f^*\le \frac{c_2}{\sigma_2}\frac{1}{r-T}, \; \forall~r>T,
\end{align}
where the constants are given below
\begin{align}
{c}_2&=\max\{4\sigma_2-2, f(x^1)-f^*,2\}\nonumber,\\
\sigma_2&=\frac{\gamma}{ K {T} R^2 G^2_{\rm max}}.
\end{align}
\item Suppose the Lipchitz continuity assumption \eqref{eq:h_lip} holds true.  Let $\{\bx^r\}$ be the sequence generated by the BSUM algorithm with G-So and MBI rule. Then we have
\begin{align}
\Delta^r=f(x^r)-f^*\le \frac{1}{{\sigma}_3 r}
\end{align}
where
\begin{align}
{\sigma}_3=\left\{\begin{array}{ll}
\frac{{\gamma}q}{2K\left((Q+L_h)^2+L^2_{\rm max}KR^2\right)},\quad & \mbox{\rm (G-So rule)}\\
\frac{{\gamma}}{2K\left((Q+L_h)^2+L^2_{\rm max}KR^2\right)},\quad & \mbox{\rm (MBI rule)}
\end{array}\right..
\end{align}
\end{enumerate}

\end{theorem}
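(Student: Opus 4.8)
The plan is to turn the two preceding lemmas into a scalar recursion on the optimality gaps $\Delta^r$ and then invoke a standard argument showing that such a recursion forces an $\mathcal{O}(1/r)$ rate. The point is that Lemma~\ref{lm:p-descent} lower‑bounds a one–step (or, for E‑C, a $T$–step) \emph{decrease} of $\Delta^r$ by a multiple of a squared step length, whereas Lemma~\ref{lemma:clost_to_go} upper‑bounds $\Delta^{r+1}$ (or a power of it) by the \emph{same} squared step length; chaining the two eliminates the step length and leaves an inequality relating only consecutive gaps.

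\textbf{G-S rule.} Combining \eqref{eq:descent-GS} with Lemma~\ref{lemma:clost_to_go}(1) gives, for all $r\ge1$,
\[
\Delta^{r}-\Delta^{r+1}\ \ge\ \gamma\|x^{r}-x^{r+1}\|^{2}\ \ge\ \frac{\gamma}{KG_{\max}^{2}R^{2}}\,(\Delta^{r+1})^{2}\ =\ \sigma_{1}(\Delta^{r+1})^{2}.
\]
I would then prove $\Delta^{r}\le \tfrac{c_{1}}{\sigma_{1}r}$ by induction on $r$. Since $\psi(t):=t+\sigma_{1}t^{2}$ is increasing on $[0,\infty)$, the relation $\psi(\Delta^{r+1})\le\Delta^{r}\le\tfrac{c_{1}}{\sigma_{1}r}$ together with the induction hypothesis reduces the desired bound on $\Delta^{r+1}$ to the elementary inequality $1+\tfrac1r\le c_{1}$, which holds for every $r\ge1$ because $c_{1}\ge2$. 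The base cases $r=1,2$ are obtained by solving the quadratic $\sigma_{1}(\Delta^{2})^{2}+\Delta^{2}\le\Delta^{1}$ and by $\Delta^{1}\le f(x^{1})-f^{*}$; this is precisely where the term $4\sigma_{1}-2$ in the definition of $c_{1}$ is needed.

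\textbf{E-C and G-So/MBI rules.} For the period‑$T$ E‑C rule I would first telescope the sufficient‑descent bound \eqref{eq:descent-GS} over a full period, $\Delta^{r}-\Delta^{r+T}\ge\gamma\sum_{t=1}^{T}\|x^{r+t}-x^{r+t-1}\|^{2}$, and pair it with Lemma~\ref{lemma:clost_to_go}(2) to get the $T$–step recursion $\Delta^{r}-\Delta^{r+T}\ge\sigma_{2}(\Delta^{r+T})^{2}$ with $\sigma_{2}=\gamma/(KTR^{2}G_{\max}^{2})$. Running the induction above on the subsequence $\{\Delta^{1+jT}\}_{j\ge0}$ gives $\Delta^{1+jT}=\mathcal{O}(1/j)$, and monotonicity of $\{\Delta^{r}\}$ (itself a consequence of \eqref{eq:descent-GS}) transfers this to all indices: writing $r-1=jT+s$, $0\le s<T$, yields $j\ge(r-T)/T$ and hence the stated $\Delta^{r}\le\tfrac{c_{2}}{\sigma_{2}}\tfrac{1}{r-T}$ for $r>T$. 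For the G-So/MBI rules, the extra assumption that $h$ is Lipschitz makes Lemma~\ref{lemma:clost_to_go}(3) bound $\Delta^{r}$ \emph{linearly} (not quadratically) by $\|\hx^{r+1}-x^{r}\|^{2}$, so chaining with \eqref{eq:descent-GSo} gives the \emph{geometric} recursion $\Delta^{r}-\Delta^{r+1}\ge\sigma_{3}\Delta^{r}$ with $\sigma_{3}\in(0,1]$; since a geometric rate is stronger than $\mathcal{O}(1/r)$, the claimed bound follows, e.g.\ from $(1-\sigma_{3})^{r}\le\tfrac{1}{1+r\sigma_{3}}$ after a short step to absorb $\Delta^{1}$.

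\textbf{Main obstacle.} The non‑routine part is the passage ``scalar recursion $\Rightarrow\ \mathcal{O}(1/r)$'' carried out with clean explicit constants: matching exactly $c_{1},\sigma_{1}$ (and $c_{2},\sigma_{2}$) requires care in the base cases, and for the E‑C rule one must check that telescoping over a period neither inflates the constant beyond $\sigma_{2}\propto 1/T$ nor costs more than an additive $T$ in the denominator when passing from the subsequence back to all indices. The G-So/MBI case is essentially free once the correct linear‑in‑$\Delta^{r}$ cost‑to‑go estimate is available; its only genuine subtlety is that, unlike G-S/E-C, it relies on the Lipschitz continuity of $h$.
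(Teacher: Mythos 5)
Your treatment of the G-S and E-C cases is essentially the paper's own proof: chain Lemma~\ref{lm:p-descent} with Lemma~\ref{lemma:clost_to_go} to get $\Delta^{r}-\Delta^{r+1}\ge\sigma_1(\Delta^{r+1})^2$ (resp.\ the $T$-step analogue with $\sigma_2$), then run the same induction with the same base-case role for $4\sigma-2$ and the same subsequence/monotonicity bookkeeping yielding $1/(r-T)$. No issues there.

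The G-So/MBI case, however, contains a genuine error. You read Lemma~\ref{lemma:clost_to_go}(3) as bounding $\Delta^{r}$ \emph{linearly} by $\|\hx^{r+1}-x^{r}\|^{2}$ and conclude a geometric recursion $\Delta^{r+1}\le(1-\sigma_3)\Delta^{r}$. That cannot be right: the problem is merely convex, and a global linear rate would be strictly stronger than what the theorem claims (and than what is attainable in this setting). The statement of Lemma~\ref{lemma:clost_to_go}(3) as printed is a typo; what its proof actually establishes is
\begin{align}
\Delta^{r}\ \le\ \bigl((Q+L_h)+\sqrt{K}\,L_{\max}R\bigr)\,\|\hx^{r+1}-x^{r}\|,
\qquad\text{hence}\qquad
(\Delta^{r})^{2}\ \le\ 2\bigl((Q+L_h)^{2}+K L_{\max}^{2}R^{2}\bigr)\,\|\hx^{r+1}-x^{r}\|^{2},\nonumber
\end{align}
i.e.\ the bound is quadratic in $\Delta^r$ on the left, exactly as in parts (1) and (2) of that lemma. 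Chaining with \eqref{eq:descent-GSo} then gives only
\begin{align}
\Delta^{r}-\Delta^{r+1}\ \ge\ \sigma_3\,(\Delta^{r})^{2},\nonumber
\end{align}
which is the recursion the paper actually uses. From it one recovers the stated $O(1/r)$ bound by the standard argument: dividing by $\Delta^{r}\Delta^{r+1}$ and using $\Delta^{r+1}\le\Delta^{r}$ yields $\tfrac{1}{\Delta^{r+1}}-\tfrac{1}{\Delta^{r}}\ge\sigma_3\tfrac{\Delta^{r}}{\Delta^{r+1}}\ge\sigma_3$, and $\sigma_3\Delta^{1}\le 1$ (which follows from the $r=1$ instance of the recursion) gives $\Delta^{r}\le\tfrac{1}{\sigma_3 r}$ without any extra constant. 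So your part (3) needs to be replaced by this argument; as written, the geometric recursion does not follow from a correct cost-to-go estimate.
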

\begin{proof}
We first show part (1) of the claim by mathematical induction on $r$. From Lemma \ref{lemma:clost_to_go} and Lemma \ref{lm:p-descent}, we have that for the G-S rule, we have
\begin{align}\label{eq:key_recursion}
\Delta^r-\Delta^{r+1}\ge \frac{\gamma}{K G^2_{\max}R^2}(\Delta^{r+1})^2:=\sigma_1 (\Delta^{r+1})^2, \; \forall~r\ge 1,
\end{align}
or equivalently
\begin{align}\label{eq:key_recursion2}
\sigma_1 (\Delta^{r+1})^2+\Delta^{r+1}\le \Delta^r, \ \forall\ r\ge 1.
\end{align}
By definition, we have $\Delta^1=f(x^1)-f^*$. We first argue that
\begin{align}
\Delta^2\le \frac{{c}_1}{2\sigma_1},\ \mbox{with}\; {c}_1:=\max\{4\sigma_1-2, f(\bx^1)-f^*,2\}.
\end{align}
From \eqref{eq:key_recursion2} and the fact that $\Delta^1\le {c}_1$, we have
\begin{align}
\Delta^2&\le \frac{-1+\sqrt{1+4\sigma_1 {c}_1}}{2\sigma_1}=\frac{2 {c}_1}{1+\sqrt{1+4\sigma_1 {c}_1}}\le \frac{2 {c}_1}{1+|4\sigma_1-1|}\nonumber
\end{align}
where in the last inequality we have used the fact that ${c}_1\ge 4\sigma_1-2$. Suppose $4\sigma_1-1\ge 0$, then we immediately have $\Delta^2\le \frac{{c}_1}{2\sigma_1}$. Suppose $4\sigma_1-1<0$, then
\begin{align}
\Delta^2\le \frac{2{c}_1}{2-4\sigma_1}\le \frac{2{c}_1}{8\sigma_1-4\sigma_1}=\frac{{c}_1}{2\sigma_1}.
\end{align}
Next we argue that if $\Delta^r\le \frac{{c}_1}{r\sigma_1}$, then we must have
\begin{align}
\Delta^{r+1}\le\frac{{c}_1}{(r+1)\sigma_1}.
\end{align}
Using the condition \eqref{eq:key_recursion2} and the inductive hypothesis $\Delta^r\le \frac{{c}_1}{r\sigma_1}$, we have
\begin{align}
\Delta^{r+1}&\le\frac{-1+\sqrt{1+\frac{4{c}_1}{r}}}{2\sigma_1}
=\frac{2{c}_1}{r\sigma_1\left(1+\sqrt{1+\frac{4{c}_1}{r}}\right)}\nonumber\\
&\le \frac{2{c}_1}{\sigma_1\left(r+\sqrt{r^2+4r+4}\right)}=\frac{{c}_1}{\sigma_1(r+1)}
\end{align}
where the last inequality is due to the fact that ${c}_1\ge 2$, and $r\ge 2$.
Consequently, we have shown that  for all $r\ge 1$
\begin{align}
\Delta^r=f(x^r)-f^*\le \frac{{c}_1}{\sigma_1}\frac{1}{r}.
\end{align}
For the E-C rule, first note that from Lemma \ref{lm:p-descent}, we have
\begin{align}
\Delta^r-\Delta^{r+T}\ge \frac{\gamma}{T K  R^2 G^2_{\rm max}}(\Delta^{r+T})^2:=\sigma_2 (\Delta^{r+T})^2, \; \forall~r\ge 1.
\end{align}
Then using the similar argument as for the G-S rule, we can obtain the desired result.

Next we show part (3) of the claim. For the G-So rule, we have from
Lemma \ref{lemma:clost_to_go}, the second part of Lemma \ref{lm:p-descent}, that for all $r\ge 1$
\begin{align}\label{eq:key_recursion_s_bcd}
&\Delta^r-\Delta^{r+1}\ge \frac{q}{K}{\gamma}\|\hx^{r+1}-x^r\|^2\ge\frac{{\gamma}q}{2K\left((Q+L_h)^2+L^2_{\rm max}KR^2\right)}(\Delta^{r})^2:={\sigma}_3 (\Delta^{r})^2.
\end{align}
Similar relation can be shown for the MBI rule as well. The rest of the proof follows standard argument, see for example \cite[Theorem 1]{nestrov12}.  \QED
\end{proof}

Below we provide further remarks on some special cases of the BSUM algorithm.
\begin{enumerate}
\item One popular choice of the upper bound function $u_k(\cdot,\cdot)$ is \cite{HeLiaoHanYang2002,WangYuan2012, zhang11primaldual, Yang10ADMM,Beck13,nestrov12,shalev11}
\begin{align}\label{eq:particular_bound}
u_k(z_k; x):= g(x)+\langle\nabla_k g(x),  z_k-x_k\rangle+\frac{L_k}{2}\|z_k-x_k\|^2
\end{align}
where the constant  $L_k \ge \rho_{\max}(\nabla^2 g(x))$,  is  often chosen to be largest eigenvalue of the Hessian of $g(x)$.  In this case, evidently we have $\gamma_k=L_k=M_k\le M$, for all $k$, and {$G_{\max}\le M$}. We can also verify that $G_k\le 2M$ for all $k$. Using this choice of $u_k(\cdot;\cdot)$ and $L_k$, the first result in Theorem \ref{thm:complexity} reduces to
\begin{align}
\Delta^r&\le 2\frac{{c}_1 KM^2R^2}{M_{\rm min}}\frac{1}{r}\label{eq:bound_case1}
\end{align}
where $M_{\rm min}:=\min_{k}M_k$. Let us compare the order given in \eqref{eq:bound_case1} with the one stated in \cite[Theorem 6.1]{Beck13}, which is the best known complexity bound for the G-S BCD algorithm for {\it smooth} problems (i.e., when $h_k$ is not present). The bound derived in \cite{Beck13} for smooth constrained problem (resp. smooth unconstrained problem) is in the order of $\frac{KM^2R^2}{M_{\min}}\frac{1}{r}$ (resp. $\frac{M_{\max}K M^2 R^2}{M^2_{\min}}\frac{1}{r}$). These orders are approximately the same as \eqref{eq:bound_case1}. However, our proof covers the general nonsmooth cases, and is simpler. Similarly, when $u_k(\cdot;\cdot)$ takes the form \eqref{eq:particular_bound}, the bounds for the BSUM with the E-C/G-So/MBI rules shown in Theorem~\ref{thm:complexity} can also be simplified.

\item The results derived in Theorem \ref{thm:complexity} is equally applicable to the BCM scheme \eqref{eq:GSperblock} with various block selection rules discussed above. In particular, we can specialize the upper-bound function $u_k$ to be the original smooth function $g$. As long as $g(x_1,\cdots,x_K)$ satisfies the BSC property, Theorem \ref{thm:complexity} carries over. As mentioned in Section \ref{sub:assumptions}, the BSC property is fairly mild and is satisfied in many engineering applications. Nevertheless, we will further relax the BSC condition in the subsequent sections.

\end{enumerate}

\newsection{The BSUM for Single Block Problem}
\subsection{The SUM Algorithm}\label{sub:sum}
In this section, we consider the following single-block problem with $K=1$:
\begin{align}\label{eq:1block_problem}
\min &\quad f(x):=g(x)+h(x)\nonumber\\
\st &\quad x\in X.
\end{align}
In this case the BSUM algorithm reduces to to the so-called successive upper-bound minimization (SUM) algorithm \cite{Razaviyayn12SUM}, listed in the following table.
\begin{center}
\fbox{
\begin{minipage}{5.2in}
\smallskip
\centerline{\bf The Successive Upper-Bound Minimization (SUM) Algorithm}
\smallskip
At each iteration $r+1$, do:
\begin{equation}\label{eq:SUM}
x^{r+1}\in \min_{x\in X}\; u\left(x; x^{r}\right)+h(x).
\end{equation}
\end{minipage}
}
\end{center}
Let us make the following assumptions on the function $u(v; x)$.
\pn {\bf Assumption C.}
\begin{itemize}
\item [(a)]  $u(x; x)= g(x), \quad \forall\; x\in {X}.$
\item [(b)] $u(v; x) \geq g(v),\quad\; \forall\; v \in {X}, \ \forall\; x \in{X}.$
\item [(c)] $\nabla u(x;x)= \nabla g(x), \quad\; \forall\; x\in X$.
\item [(d)] For any given $x$, $u(v; x)$ has Lipschitz continuous gradient, that is
\begin{align}\label{eq:u_lipchitz}
\|\nabla u(v; x)-\nabla u(\hat{v}; x)\|\le L\|v-\hat{v}\|,\ \forall\ \hat{v},\ v\in X, \forall~x\in X,
\end{align}
where $L>0$ is some constant.
\end{itemize}

Compared to Assumption B, Assumption C does not require $u(v;x)$ to be strongly convex in $v$, nor $\nabla u(v; x)$ to be Lipschitz continuous over $x$.
Notice that the Lipschitz continuity of  $\nabla u$ given in \eqref{eq:u_lipchitz} implies the Lipschitz continuity of $\nabla g$.
\begin{proposition}\label{prop:lip}
Suppose $g(x)$ is convex, and $u(v; x)$ satisfies Assumption C. Then we must have
\begin{align}\label{eq:lip:equal}
\|\nabla g(v)-\nabla g(x)\|\le L \|v-x\|,\quad\forall~x,v \in X.
\end{align}
That is, $\nabla g$ is Lipschitz continuous with the coefficient no larger than $L$.
\end{proposition}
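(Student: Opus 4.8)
The plan is to exploit the interplay between the three properties of the upper bound $u$ that are directly about its gradient: the tangency conditions (a) and (c), and the Lipschitz bound (d). Fix $x, v \in X$. The key observation is that $u(\cdot; x)$ is an upper bound on $g$ that agrees with $g$ at the point $x$ to first order, so the difference $\phi_x(\cdot) := u(\cdot; x) - g(\cdot)$ is a nonnegative function (by (b)) with $\phi_x(x) = 0$ (by (a)) and $\nabla \phi_x(x) = 0$ (by (c)); hence $x$ is a global minimizer of $\phi_x$. From this I want to extract that $\nabla \phi_x(v) = \nabla u(v;x) - \nabla g(v)$ is controlled.

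First I would record the standard consequence of (d): for any $y \in X$,
\[
u(y; x) \le u(x; x) + \langle \nabla u(x; x), y - x \rangle + \frac{L}{2}\|y - x\|^2 = g(x) + \langle \nabla g(x), y - x \rangle + \frac{L}{2}\|y - x\|^2,
\]
using (a) and (c) in the last step. Combined with the upper-bound property (b), $g(y) \le u(y;x)$, this yields the ``descent lemma'' type inequality
\[
g(y) \le g(x) + \langle \nabla g(x), y - x \rangle + \frac{L}{2}\|y - x\|^2, \quad \forall\, x, y \in X.
\]
So $g$ itself satisfies a quadratic upper bound with constant $L$ on $X$; this is the crucial intermediate fact.

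Next I would convert this quadratic upper bound plus convexity of $g$ into the desired Lipschitz gradient inequality. The standard route: fix $x, v$, set $d = \nabla g(v) - \nabla g(x)$, and apply the quadratic upper bound at the point $v$ with increment toward $v - \tfrac{1}{L} d$ (a minimizer of the right-hand side), to get $g\big(v - \tfrac1L d\big) \le g(v) - \tfrac{1}{2L}\|d\|^2$. Pairing this with the convexity (first-order) inequality $g\big(v - \tfrac1L d\big) \ge g(x) + \langle \nabla g(x), v - \tfrac1L d - x\rangle$ and the symmetric first-order inequality at $v$, and adding, the function values cancel and one is left with $\tfrac{1}{L}\|d\|^2 \le \langle d, v - x\rangle$, i.e. $\|\nabla g(v) - \nabla g(x)\|^2 \le L \langle \nabla g(v) - \nabla g(x), v - x\rangle$; Cauchy–Schwarz then gives $\|\nabla g(v) - \nabla g(x)\| \le L\|v - x\|$, which is \eqref{eq:lip:equal}.

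The one point that needs a little care — and the only real obstacle — is the domain restriction: the descent lemma argument and the co-coercivity argument both involve auxiliary points such as $v - \tfrac{1}{L} d$, which need not lie in $X$. The clean fix is to note that the inequalities in Assumption C are assumed on all of $X$, and to restrict attention to the segment between $x$ and $v$ (which lies in $X$ by convexity of $X$), parametrizing $g$ along that segment; alternatively, since the statement is about $\nabla g$ on $X$ and the conclusion is a two-point estimate, one can argue on the one-dimensional convex function $t \mapsto g(x + t(v-x))$ on $[0,1]$, for which the quadratic upper bound with constant $L\|v-x\|^2$ holds, and deduce $|\varphi'(1) - \varphi'(0)| \le L\|v-x\|^2$ in the usual way; chasing this back through the chain rule gives $\langle \nabla g(v) - \nabla g(x), v - x\rangle \le L\|v-x\|^2$, and then the full-norm bound follows by the co-coercivity trick applied in one dimension. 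I expect the write-up to be short once the descent-lemma inequality is in place.
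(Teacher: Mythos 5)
Your proposal follows essentially the same route as the paper: first derive the descent-lemma inequality $g(y)\le g(x)+\langle\nabla g(x),y-x\rangle+\frac{L}{2}\|y-x\|^2$ from Assumption C(a)--(d), then run Nesterov's co-coercivity argument (the paper explicitly mirrors \cite[Theorem 2.1.5]{Nesterov04}, constructing $\phi(\cdot)=g(\cdot)-\langle\nabla g(v),\cdot\rangle$, symmetrizing, and adding to get $\frac1L\|\nabla g(x)-\nabla g(v)\|^2\le\langle\nabla g(x)-\nabla g(v),x-v\rangle$). The domain concern you raise about auxiliary points such as $v-\frac1L d$ possibly leaving $X$ is real but is silently ignored by the paper as well; note only that your proposed one-dimensional fix would yield just the directional bound $\langle\nabla g(v)-\nabla g(x),v-x\rangle\le L\|v-x\|^2$, not the full norm estimate, so it does not fully substitute for the off-segment step.
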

\begin{proof}
Utilizing Assumption C, we must have
\begin{align*}
g(v)-g(x) &\le u(v; x) - u(x; x)\\
&\le \langle \nabla u(x;x), v-x\rangle +\frac{L}{2}\|x-v\|^2\\
&= \langle \nabla g(x), v-x\rangle +\frac{L}{2}\|x-v\|^2, \; \forall~x, v\in X.
\end{align*}
Further, using the convexity of $g$ we have
\begin{align*}
g(v)-g(x)\ge \langle \nabla g(x), v-x\rangle, \; \forall~x, v\in X.
\end{align*}
Combining these two inequalities we obtain
\begin{align}\label{eq:g:L}
0\le g(v)-g(x)- \langle \nabla g(x), v-x\rangle \le  \frac{L}{2}\|x-v\|^2, \; \forall~x, v\in X.
\end{align}

Similar to \cite[Theorem 2.1.5]{Nesterov04}, we construct the following function
$$\phi(x) = g(x) - \langle\nabla g(v), x\rangle .$$
Clearly $v \in \arg\min \phi(x) $. We have
\begin{align}
\phi(v)\le \phi\left(x-\frac{1}{L}\nabla g(x)\right)\le \phi(x)-\frac{1}{2L}\|\nabla \phi(x)\|^2
\end{align}
where the first inequality is due to the optimality of $v$ and the second inequality uses \eqref{eq:g:L}. Plugging in the definition of $\phi(x)$ and $\phi(v)$ we have
$$g(v)-\langle\nabla g(v), v\rangle\le g(x)-\langle\nabla g(v), x\rangle-\frac{1}{2L}\|\nabla g(v)-\nabla g(x)\|^2.$$
Since the above inequality is true for any $x, v\in X$, we can  interchange $x$ and $v$ and obtain
$$g(x)-\langle\nabla g(x), x\rangle\le g(v)-\langle\nabla g(x), v\rangle-\frac{1}{2L}\|\nabla g(v)-\nabla g(x)\|^2.$$
Adding these two inequalities we obtain
\begin{align*}
\frac{1}{L}\|\nabla g(x)-\nabla g(v)\|^2\le \langle \nabla g(x)-\nabla g(v), x-v\rangle\le \|\nabla g(x)-\nabla g(v)\|\|x-v\|.
\end{align*}
Cancelling $\|\nabla g(x)-\nabla g(v)\|$ we arrive at the desired results.
\QED
\end{proof}

We remark that this result is only true when both $g(\cdot)$ and $u(\cdot; \cdot)$ are convex functions.

 Our main result is that the SUM algorithm converges sublinearly under Assumption C, {\it without} the strong convexity of the upper-bound function $u(v;x)$ in $v$.  The proof of this claim is an extension of Theorem \ref{thm:complexity}, therefore we will only provide its key steps. Observe that the following is true
 \begin{align}
 f(x^r) - f(x^{r+1})&\stackrel{\rm (i)}\ge f(x^r) - \left(u(x^{r+1}; x^r)+h(x^{r+1})\right)\nonumber\\
 &\stackrel{\rm (ii)}\ge f(x^r) - \left(u(\tx^{r+1}; x^r)+h(\tx^{r+1})\right)\stackrel{\rm (iii)}\ge \frac{\gamma}{2}\|x^r- \tx^{r+1}\|^2\label{eq:1block:chain}
 \end{align}
 where $\tx^{r+1}$ is the iterate obtained by solving the following auxiliary problem for any $\gamma>0$
 \begin{align}
 \tx^{r+1}=\arg\min_{x\in X} u(x; x^r)+h(x)+\frac{\gamma}{2}\|x-x^r\|^2.\label{eq:auxiliary:1block}
 \end{align}
In \eqref{eq:1block:chain}, $\rm{(i)}$ is true because $u(x; y)$ is an upper-bound function for $g(x)$ satisfying Assumption C(b); $\rm{(ii)}$  is true because $x^{r+1}$ is a minimizer of problem \eqref{eq:SUM}; $\rm{(iii)}$ is true due to the fact that $\tx^{r+1}$ is the optimal solution of \eqref{eq:auxiliary:1block} while $x^r$ is a feasible solution. 

{
Then we bound $f(x^{r+1})$ using $f(\tx^{r+1})$. We have
\begin{align}
f(x^{r+1})&\le u(x^{r+1};x^r)+ h(x^{r+1})\nonumber\\
&\stackrel{\rm (i)}\le u(\tx^{r+1};x^r)+ h(\tx^{r+1})\nonumber\\
&\stackrel{\rm (ii)}\le u(x^{r};x^r)+\langle \nabla u(x^r; x^r), \tx^{r+1}-x^r \rangle+\frac{L}{2}\|\tx^{r+1}-x^r\|^2 + h(\tx^{r+1})\nonumber\\
&\stackrel{\rm (iii)}\le g(\tx^{r+1})+\langle \nabla u(x^r; x^r), \tx^{r+1}-x^r \rangle+\langle \nabla g(\tx^{r+1}), x^{r}-\tx^{r+1} \rangle+{L}\|\tx^{r+1}-x^r\|^2 + h(\tx^{r+1})\nonumber\\
&\stackrel{\rm (iv)}= g(\tx^{r+1})+\langle \nabla g(\tx^{r+1})-\nabla g(x^r), x^{r}-\tx^{r+1} \rangle+{L}\|\tx^{r+1}-x^r\|^2 + h(\tx^{r+1}) \nonumber\\
&\stackrel{\rm (v)}\le   f(\tx^{r+1}) +{L}\|\tx^{r+1}-x^r\|^2\nonumber
\end{align}
where $\rm (i)$ is due to the optimality of $x^{r+1}$ for problem \eqref{eq:SUM}; $\rm  (ii)$ uses the gradient Lipschitz continuity of $u(\cdot; x^r)$; $\rm  (iii)$ uses the fact that $u(x^r;x^r) = g(x^r)$, the gradient Lipschitz continuity of $g(\cdot)$ derived in Proposition \ref{prop:lip}; $\rm{(iv)}$ uses the fact that $\nabla u(x^r; x^r) = \nabla g(x^r)$ (cf. Assumption C(c)); $\rm(v)$ uses the convexity of $g(\cdot)$. 

Utilizing this bound, we derive the estimate of the cost-to-go
\begin{align}
f(x^{r+1})-f(x^*) &\le f(\tx^{r+1})-f(x^*)+{L}\|\tx^{r+1}-x^r\|^2\nonumber\\
&\le \left\langle \nabla g(\tx^{r+1}), \tx^{r+1}-x^*\right\rangle +h(\tx^{r+1})-h(x^*)+{L}\|\tx^{r+1}-x^r\|^2\nonumber\\
&= \left\langle \nabla g(\tx^{r+1})- \nabla g(x^r), \tx^{r+1}-x^*\right\rangle+{L}\|\tx^{r+1}-x^r\|^2 \nonumber\\
&\quad +\left\langle \nabla g(x^{r})- \nabla \left(u(\tx^{r+1}; x^r)+\frac{\gamma}{2}\|\tx^{r+1}-x^r\|^2\right), \tx^{r+1}-x^*\right\rangle \nonumber\\
 &\quad +h(\tx^{r+1})-h(x^*)+\left\langle\nabla \left(u(\tx^{r+1}; x^r)+\frac{\gamma}{2}\|\tx^{r+1}-x^r\|^2\right), \tx^{r+1}-x^*\right\rangle\nonumber\\
&\stackrel{\rm (i)}\le \left\langle\nabla g(\tx^{r+1})- \nabla g(x^r), \tx^{r+1}-x^*\right\rangle +{L}\|\tx^{r+1}-x^r\|^2\nonumber\\
&\quad +\left\langle \nabla u(x^{r}; x^{r})- \nabla u(\tx^{r+1}; x^r), \tx^{r+1}-x^*\right\rangle - \gamma \left\langle \tx^{r+1}-x^r, \tx^{r+1}-x^*\right\rangle\nonumber\\
&\stackrel{\rm (ii)}\le  (2L+\gamma)\|\tx^{r+1}-x^r\|R +{L}\|\tx^{r+1}-x^r\|\|\tx^{r+1}-x^*+x^*-x^r\| \nonumber\\
& \le  (4L+\gamma)\|\tx^{r+1}-x^r\|R \nonumber.
\end{align}
Here $\rm{(i)}$ is due to the optimality of $\tx^{r+1}$ to the problem \eqref{eq:auxiliary:1block}; in $\rm{(ii)}$ we have used \eqref{eq:lip:equal}, Cauchy-Schwartz inequality and the definition of $R$ (it is easy to show that $f(\tx^{r+1})\le f(x^r)\le f(x^0)$, hence $\|\tx^{t+1}-x^*\|\le R$ for all $t$).}

Combining the above two inequalities, we obtain
\begin{align}
\Delta^{r}-\Delta^{r+1}\ge \frac{\gamma}{2R^2 (4L+\gamma)^2}(\Delta^{r+1})^2,\quad \forall \gamma>0.
\end{align}
Maximizing over $\gamma$ (with $\gamma=4L$),  we have
\begin{align}
\Delta^{r}-\Delta^{r+1}\ge \frac{1}{32R^2 L}(\Delta^{r+1})^2:=\sigma_4(\Delta^{r+1})^2.
\end{align}
Using the same derivation as in Theorem \ref{thm:complexity}, we obtain
\begin{align}
\Delta^{r+1}\le \frac{c_4}{\sigma_4}\frac{1}{r}, \quad \mbox{with}\quad \sigma_4=\frac{1}{32R^2 L},\quad c_4:=\max\{4 \sigma_4-2, f(x^1)-f^*, 2\}\label{eq:rate:1block}.
\end{align}

\subsection{Application}

 To see the importance of the above result, consider the well-known method of Iterative Reweighted Least Squares (IRLS) \cite{Beck13b, Daubechies10}. The IRLS is a popular algorithm used for solving problems such as sparse recovery and Fermat-Weber problem; see \cite[Section 4]{Beck13b} for a few applications. Consider the following problem
 \begin{align}
\min_{x}&\quad  h(x) + \sum_{j=1}^{\ell}\|A_j x + b_j\|_2,\quad \st \quad x\in X \label{eq:irls}
 \end{align}
 where $A_j\in\mathbb{R}^{k_i\times m}$, $b_j\in\mathbb{R}^{k_i}$, $X\subseteq \mathbb{R}^m$, and $h(x)$ is some convex function not necessarily smooth. Let us introduce a constant $\eta>0$ and consider a {\it smooth approximation} of problem \eqref{eq:irls}:
\begin{align}
\min_{x}&\quad h(x)+g(x): = h(x) + \sum_{j=1}^{\ell}\sqrt{\|A_j x + b_j\|^2_2+\eta^2},\quad \st \quad  x\in X \label{eq:irls:smooth}.
 \end{align}
  The IRLS algorithm generates the following iterates
 \begin{align}
 x^{r+1}=\arg\min_{x\in X }\left\{h(x)+\frac{1}{2}\sum_{j=1}^{\ell}\frac{\|A_j x+b_j\|^2+\eta^2}{\sqrt{\|A_j x^r+b_j\|^2+\eta^2}}\right\}\label{iteration:irls}.
 \end{align}
It is known that the IRLS iteration is equivalent to a BCM method applied to the following two-block problem (i.e., the first block is $x$ and the second block is $\{z_j\}_{j=1}^{\ell}$)
 \begin{align}\label{eq:twoblock:irls}
\begin{split}
 \min&\quad h(x)+\frac{1}{2}\sum_{j=1}^{\ell}\left(\frac{\|A_j x+b_j\|^2+\eta^2}{z_j}+z_j\right)\\
 \st&\quad x\in X,\quad z_j\in [\eta/2,\infty), \ \forall~j.
 \end{split}
 \end{align}
 Utilizing such two-block BCM interpretation, the author of \cite{Beck13b} shows that the IRLS converges sublinearly when $h(x)$ has Lipschitz continuous gradient; see \cite[Theorem 4.1]{Beck13b}.

 Differently from \cite{Beck13b}, here we take a new perspective. We argue that the IRLS is in fact the SUM algorithm in disguise, therefore our simple iteration complexity analysis given in Section \ref{sub:sum} for SUM can be directly applied.

 Let us consider the following function:
 \begin{align}
 u(x; x^r) = \frac{1}{2}\sum_{j=1}^{\ell}\left(\frac{\|A_j x+b_j\|^2+\eta^2}{\sqrt{\|A_j x^r+b_j\|^2+\eta^2}}+\sqrt{\|A_j x^r+b_j\|^2+\eta^2}\right).\label{bound:irls}
 \end{align}
 It is clear that $g(x^r) = u(x^r; x^r)$, so Assumption C(a) is satisfied. To verify Assumption C(b), we apply the arithmetic-geometric inequality, and have
 \begin{align*}
 u(x; x^r) & = \frac{1}{2}\sum_{j=1}^{\ell}\left(\frac{\|A_j x+b_j\|^2+\eta^2}{\sqrt{\|A_j x^r+b_j\|^2+\eta^2}}+\sqrt{\|A_j x^r+b_j\|^2+\eta^2}\right)\nonumber\\
 &\ge \sum_{j=1}^{\ell}\sqrt{\|A_j x+b_j\|^2+\eta^2} = g(x), \; \forall~x\in X.
 \end{align*}
 Assumptions C(c)-(d) are also easy to verify. Note that the matrices $A_j$'s do not necessarily have full column rank, so $u(x;x^r)$ may not be strongly convex over $x\in X$.  Nevertheless, $u(x;x^r)$ defined in \eqref{bound:irls} is indeed an upper bound function for the smooth function $g(x)$, and we have shown that it satisfies Assumptions C. It follows that the iteration \eqref{iteration:irls} corresponds to a single-block BSUM algorithm. Our analysis leading to \eqref{eq:rate:1block} suggests that this algorithm converges in a sublinear rate, even when $h(x)$ is a nonsmooth function. To be more specific, for this problem we have
 \begin{align}
 L=\frac{1}{\eta}\rho_{\max}\left(\sum_{j=1}^{\ell}A^T_j A_j\right)\nonumber
 \end{align}

 Therefore the rate can be expressed as
 \begin{align}\Delta^{r+1}\le \max\{4 \sigma_4-2, f(x^1)-f(x^*), 2\}\frac{ 32 R^2 \rho_{\max}\left(\sum_{j=1}^{\ell}A^T_j A_j\right)}{\eta r}.\label{eq:irls:bound}
\end{align}
Note that compared with the result derived in \cite[Theorem 4.1]{Beck13b} which is based on transforming the IRLS algorithm to the two-block BCM problem \eqref{eq:twoblock:irls}, our analysis is based on the key insight of the  equivalence between IRLS and the single block BSUM, and it is significantly simpler. Further we do not require $h(x)$ to be smooth, while the result in \cite[Theorem 4.1]{Beck13b} additionally requires that the gradient of $h(x)$ is Lipschitz continuous \footnote{It appears that the proof in \cite[Theorem 4.1]{Beck13b} can be modified to allow nonsmooth $h$, just that it is not explicitly mentioned in the paper. But as it stands, the bound in \cite[Theorem 4.1]{Beck13b} is explicitly dependent on the Lipschitz constant of the gradient of $h$, while the bound we derived here in \eqref{eq:irls:bound} is not.}.

\newsection{The BSUM for Two Block Problem}
\subsection{Iteration Complexity for 2-Block BSUM}
In this section, we consider the following two-block problem ($K=2$), which is a special case of problem \eqref{eq:bcd_problem}:
\begin{align}\label{eq:2block_problem}
\begin{split}
\min &\quad f(x_1,x_2):=g(x_1,x_2)+h_1(x_1)+h_2(x_2)\\
\st &\quad x_1\in X_1, \ x_2\in X_2.
\end{split}
\end{align}
This problem has many applications, such as the special case of Example \ref{ex:wireless} with two users, the two-block formulation of the IRLS algorithm \eqref{eq:twoblock:irls} or the example presented in \cite[Section 5]{Beck13b}.
Throughout this section, we assume that Assumption A(a) is true. We make the following additional assumptions about problem \eqref{eq:2block_problem}.

 \pn {\bf Assumption D.}
 \begin{itemize}
\item [(a)] The problem $\min_{x_2\in X_2} f(x_1, x_2)$ has a unique solution.
\item [(b)] The gradient of $g(x_1,x_2)$ with respect to $x_1$ is Lipschitz continuous, i.e.,
$$\|\nabla_1 g(x_1, x_2)-\nabla_1 g(v_1, x_2)\|\le M_1\|x_1-v_1\|.$$
 \end{itemize}

Note that here we do not require that the gradient of $g(\cdot)$ with respect to the second block to be Lipschitz continuous.

We first show that for this problem BSUM with G-S update rule is able to achieve sublinear rate without the BSC condition or the Lipschitz continuity of $\nabla_2 g(x_1,x_2)$. Under the same assumption,  we further show that it is possible to accelerate the BSUM method with G-S rule to get an $\mathcal{O}(1/r^2)$ iteration complexity. 


In table given below we list the two-block BSUM algorithm with G-S update rule. 
%
\begin{center}
\fbox{
\begin{minipage}{5.2in}
\smallskip
\centerline{\bf The G-S 2-block BSUM for problem \eqref{eq:2block_problem}}
\smallskip
At each iteration $r+1$, update the variable blocks by:
\begin{align}\label{eq:2_block_original_iteration}
\begin{split}
x_2^{r+1}&=\arg\min_{x_2\in X_2} u_2(x_2; x^r_1, x^r_2)+h_2(x_2)\\
x_1^{r+1}&\in \arg\min_{x_1\in X_1} u_1(x_1; x^{r}_1,x^{r+1}_2)+h_1(x_1).
\end{split}
\end{align}
\end{minipage}
}
\end{center}

Unfortunately for the problem of interest here the rate analysis provided in Theorem \ref{thm:complexity} is no longer applicable because $\nabla_2 g(x_1,x_2)$ may not be Lipschitz continuous, and both subproblems may not be strongly convex. To analyze the convergence rate, let us consider the following special choices of the upper bound where $u_1(x_1; x)$ satisfies Assumption B(a)-(c) and the Lipschtiz continuous gradient condition \eqref{eq:uk_lipchitz}, restated below for convenience
\begin{align}\label{eq:u1_lip}
\|\nabla u_1(x_1; x)-\nabla u_1(v_1; x)\|\le L_1\|x_1-v_1\|, \; \forall~x_1, v_1\in X_1, \ \forall~x\in X.
\end{align}
By utilizing the argument in Proposition \ref{prop:lip}, we can show that $L_1\ge M_1$, therefore the following is true as well
$$\|\nabla_1 g(x_1, x_2)-\nabla_1 g(v_1, x_2)\|\le L_1\|x_1-v_1\|.$$
Further we do not use any upper bound for the second block, i.e.,  we let
$$u_2(v_2; x)=g(v_2,x_1), \; \forall~x_1\in X_1, \ v_2\in X_2.$$ This suggests that the $x_2$-block is minimized exactly.

To analyze the algorithm, it is convenient to consider an equivalent {\it single-block} problem, which only takes $x_1$ as its variable:
\begin{align}\label{eq:2block_singleblock}
\min_{x_1\in X_1}&\quad \ell(x_1)+h_1(x_1):=\min_{x_1\in X_1}\min_{x_2\in {X}_2}{f}(x_1, x_2),
\end{align}
where we have defined $\ell(x_1):=\min_{x_2\in X_2} g(x_1,x_2)+h_2(x_2)$. Let us denote an optimal solution of the  inner problem $\min_{x_2\in {X}_2}{f}(x_1, x_2)$ by the mapping: $x^*_2(x_1): X_1\to X_2$, which is a singleton for any $x_1\in X_1$ by Assumption D(a).
Next we analyze problem \eqref{eq:2block_singleblock}.

Let us define a new function
\begin{align}\label{eq:u:2block}
u(v_1; x_1):=u_1(v_1; x_1, x^*_2(x_1))+h_2(x^*_2(x_1)).
\end{align}
First we argue that for all $x_1,v_1\in X_1$, $u(v_1;x_1)$ is an upper bound for $\ell(v_1)$, and it satisfies Assumption C given in Section \ref{sub:sum}. Clearly Assumption C(a) is true because
\begin{align}
\ell(x_1)=g(x_1,x_2^*(x_1))+h_2(x^*_2(x_1)) = u_1(x_1; x_1, x^*_2(x_1))+h_2(x^*_2(x_1))=u(x_1; x_1)
\end{align}
where the second equality is due to the fact that $u_1(x_1; x)$ is an upper bound function for $g(\cdot, x_2)$. The last equality is from the definition of $u(\cdot;\cdot)$.

Assumption C(b) is true because
$$u(v_1 ;x_1)=u_1(v_1; x_1, x^*_2(x_1))+h_2(x^*_2(x_1))\ge g(v_1, x^*_2(x_1))+h_2(x^*_2(x_1))\ge \min_{x_2}g(v_1,x_2)+h_2(x_2).$$
To verify Assumption C(c), recall that by Assumption D the inner problem $\min_{x_2\in {X}_2}{f}(x_1, x_2)$ has a {\it unique} solution, or equivalently  for any given $x_1\in X_1$, the mapping $x^*_2(x_1)$ is a singleton. By applying \cite[Corollary 4.5.2--4.5.3]{Hiriart-Urruty}, we obtain
\begin{align}\label{eq:gradient_consistency}
\nabla \ell (x_1)=\nabla_1 g\left(x_1, \tx_2\right), \; \forall~x_1\in X_1
\end{align}
where $\tx_2=\arg\min_{x_2\in X_2} f(x_1,x_2)$.
Therefore, we must have
$$\nabla \ell(x_1) = \nabla_1 g\left(x_1, \tx_2\right) = \nabla u_1(x_1; x_1, \tx_2)= \nabla u_1(x_1; x_1, x^*_2(x_1))=\nabla u(x_1;x_1),$$
where the second equality comes from the fact that $u_1(\cdot; \cdot)$ satisfies Assumption B(c); the third inequality is because $\tx_2=x^*_2(x_1)$ by definition; the last equality is from \eqref{eq:verify:C1}. This verifies Assumption C(c).

The Lipschitz continuous gradient condition (with constant $L_1$) in Assumption C(d) can be verified by combining \eqref{eq:u1_lip} and the following equality
\begin{align}
\nabla u_1(v_1; x_1, x^*_2(x_1))=\nabla u(v_1;x_1),\; \forall~v_1, x_1\in X_1.\label{eq:verify:C1}
\end{align}

Now that we have verified that $u(v_1; x_1)$ given in \eqref{eq:u:2block} satisfies Assumption C, then Proposition \ref{prop:lip} implies $\ell(\cdot)$ also has Lipschitz continuous gradient with constant $L_1$, that is
$$\|\nabla \ell(x_1)-\nabla \ell(v_1)\|\le L_1\|x_1-v_1\|,\; \forall~v_1, x_1\in X.$$

At this point it is clear that the 2-block BSUM algorithm with G-S update rule is in fact the SUM algorithm given in Section \ref{sub:sum}, where the iterates are generated by
\begin{align}\label{eq:2block:iterate}
x^{r+1}_1\in \arg\min u(x_1; x^r_1).
\end{align}
By applying the argument leading to \eqref{eq:rate:1block}, we conclude that the 2-block BSUM in which the second block performs an exact minimization converges sublinearly. Also note that neither subproblems in \eqref{eq:2block_problem} is required to be strongly convex, which suggests that the BCM applied to problem \eqref{eq:2block_problem} converges sublinearly without block strong convexity. The precise statement is given in the following corollary.

\begin{corollary}\label{cor:2-block}
Assume that Assumption A(a) and D hold for problem \eqref{eq:2block_problem}. Then we have the following.
\begin{enumerate}
\item Suppose that $u_2(v_2; x)=g(x_1,v_2)$ for all $v_2\in X_2, \ x\in X$ and that $u_1(v_1; x)$ satisfies Assumption B(a)-(c) and the Lipschtiz continuous gradient condition \eqref{eq:uk_lipchitz}. Then the 2-block BSUM algorithm with G-S rule is equivalent to the SUM algorithm and converges sublinearly, i.e.,
    \begin{align}
\Delta^{r+1}\le \frac{c_4}{\sigma_4}\frac{1}{r}
\end{align}
where $c_4$ and $\sigma_4$ is given in \eqref{eq:rate:1block}, with $L$ in \eqref{eq:rate:1block} replaced by $L_1$.
\item The BCM algorithm applied to \eqref{eq:uk_lipchitz} converges sublinearly with the same rate, again with $L$ in \eqref{eq:rate:1block} replaced by $L_1$.
\end{enumerate}

\end{corollary}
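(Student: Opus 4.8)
The plan is to prove Corollary \ref{cor:2-block} by reducing the two-block BSUM with Gauss--Seidel rule to the single-block SUM algorithm already analyzed, and then invoking the rate \eqref{eq:rate:1block}. Concretely, the argument proceeds in three stages. \emph{First}, I would establish that the collapsed objective $\ell(x_1):=\min_{x_2\in X_2} g(x_1,x_2)+h_2(x_2)$ together with the constructed surrogate $u(v_1;x_1):=u_1(v_1;x_1,x^*_2(x_1))+h_2(x^*_2(x_1))$ satisfies Assumption C; this is precisely the verification carried out in the paragraphs preceding the corollary, using Assumption D(a) to guarantee that $x^*_2(x_1)$ is single-valued (so that Danskin-type differentiation via \cite[Corollary 4.5.2--4.5.3]{Hiriart-Urruty} applies and yields $\nabla\ell(x_1)=\nabla_1 g(x_1,x^*_2(x_1))$), and Assumption D(b) together with Proposition \ref{prop:lip} to transfer the Lipschitz constant $L_1$ from $u_1$ to $\ell$.

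\emph{Second}, I would verify that one full Gauss--Seidel sweep of \eqref{eq:2_block_original_iteration} coincides with one SUM step \eqref{eq:SUM} applied to $\min_{x_1\in X_1}\ell(x_1)+h_1(x_1)$ with surrogate $u$. The key observation is that since $u_2(v_2;x)=g(x_1,v_2)$, the $x_2$-update is an exact minimization $x_2^{r+1}=x^*_2(x_1^r)$; substituting this into the $x_1$-update gives $x_1^{r+1}\in\arg\min_{x_1\in X_1} u_1(x_1;x_1^r,x^*_2(x_1^r))+h_1(x_1)$, and adding the $x_1$-independent constant $h_2(x^*_2(x_1^r))$ to the objective makes this literally $\arg\min_{x_1\in X_1} u(x_1;x_1^r)+h_1(x_1)$, i.e., \eqref{eq:2block:iterate}. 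It then follows that the $x_1$-iterates produced by the 2-block BSUM are exactly the SUM iterates, and moreover $f(x_1^{r+1},x_2^{r+1})=\ell(x_1^{r+1})+h_1(x_1^{r+1})$ since $x_2^{r+1}$ minimizes the inner problem at $x_1^r$; one should check that $f(x_1^{r+1},x_2^{r+1})\le \ell(x_1^r)+h_1(x_1^r)$ to align the optimality-gap sequences $\Delta^r$ of the two algorithms and to ensure the level-set constant $R$ in \eqref{eq:rate:1block} (now measured against the optimal set of the collapsed problem) is the relevant one.

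\emph{Third}, I would simply quote the chain of inequalities culminating in \eqref{eq:rate:1block} — namely the sufficient-descent bound $\Delta^r-\Delta^{r+1}\ge\frac{\gamma}{2}\|x_1^r-\tx_1^{r+1}\|^2$ via the auxiliary strongly convex problem, the cost-to-go estimate $\Delta^{r+1}\le (4L_1+\gamma)\|\tx_1^{r+1}-x_1^r\|R$, and the resulting recursion $\Delta^r-\Delta^{r+1}\ge\sigma_4(\Delta^{r+1})^2$ with $\sigma_4=1/(32R^2L_1)$ after optimizing $\gamma=4L_1$ — to obtain part (1). For part (2), the BCM algorithm is the special case $u_1=g(\cdot,x_2)$ (which trivially satisfies Assumption B(a)--(c) and has gradient Lipschitz constant $M_1\le L_1$ with respect to its first block by D(b)), so the identical reduction applies with the same rate constant. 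The main obstacle, and the place requiring the most care, is the second stage: rigorously justifying the differentiability identity \eqref{eq:gradient_consistency} and the surrogate-consistency \eqref{eq:verify:C1} \eqref{eq:gradient_consistency} under only Assumption D(a) (uniqueness of the inner minimizer) rather than strong convexity — this is where the absence of block strong convexity is genuinely exploited, and it hinges on the marginal function $\ell$ being differentiable precisely because the argmin is a singleton, which must be invoked carefully to conclude $\nabla u(x_1;x_1)=\nabla\ell(x_1)$ and hence Assumption C(c).
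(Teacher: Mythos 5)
Your proposal follows essentially the same route as the paper: collapse the problem to the single block $x_1$ via $\ell(x_1)=\min_{x_2\in X_2}g(x_1,x_2)+h_2(x_2)$, verify that $u(v_1;x_1)=u_1(v_1;x_1,x^*_2(x_1))+h_2(x^*_2(x_1))$ satisfies Assumption C (with D(a) supplying the single-valued inner minimizer needed for $\nabla\ell(x_1)=\nabla_1 g(x_1,x^*_2(x_1))$, and Proposition \ref{prop:lip} transferring the constant $L_1$), identify one G-S sweep of \eqref{eq:2_block_original_iteration} with one SUM step, and invoke \eqref{eq:rate:1block}. The only slip is the claimed identity $f(x_1^{r+1},x_2^{r+1})=\ell(x_1^{r+1})+h_1(x_1^{r+1})$ --- since $x_2^{r+1}=x^*_2(x_1^{r})$ this equality holds with $x_1^{r}$ in place of $x_1^{r+1}$ on both sides, and only ``$\ge$'' as written --- but the inequality you then propose to verify, $f(x_1^{r+1},x_2^{r+1})\le\ell(x_1^{r})+h_1(x_1^{r})$, is exactly what is needed to align the two gap sequences, so the argument stands.
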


\subsection{Accelerating the 2-Block BSUM}

Next we show that it is possible to accelerate the above G-S BSUM iterations \eqref{eq:2_block_original_iteration} to obtain an improved rate. The main idea is again to use the {\it single-block} interpretation of the 2-block BSUM.

Let us pick the following upper bound function for $x_1$
\begin{align}\label{eq:u_1:acc}
u_1(v_1; x) &= \langle \nabla_1 g(x_1, x_2), v_1-x_1\rangle + h_1(v_1)+\frac{M_1}{2}\|v_1-x_1\|^2,
\end{align}
where $M_1$ is the Lipschitz constant for $\nabla_1 g(x_1, x_2)$.
Then utilizing the single block interpretation of the 2-block BSUM \eqref{eq:2block:iterate} we must have
\begin{align}
x^{r+1}_1&= \arg\min_{x_1\in X_1} u(x_1; x^r_1)=\arg\min_{x_1\in X_1} u_1(x_1; x^r_1, x^{r+1}_2)+h_1(x_1)\nonumber\\
&=\prox^{M_1}_{h_1+I_{X_1}}\left[ x^r_1 -\frac{1}{M_1}\nabla g(x^r_1, x^{r+1}_2)\right]\nonumber\\
&= \prox^{M_1}_{h_1+I_{X_1}}\left[ x^r_1 -\frac{1}{M_1}\nabla \ell(x^r_1)\right]
\end{align}
where the last inequality comes from \eqref{eq:gradient_consistency}.

This observation lends itself to a simple acceleration scheme by applying known Nesterov-type acceleration schemes. The scheme, named Accelerated 2-Block BSUM (A-2BSUM) Algorithm, is described in the following table. The $\mathcal{O}(1/r^2)$ iteration complexity of the algorithm can be obtained directly from existing analysis for accelerated proximal gradient; see, e.g.,
\cite{Beck:2009:FIS:1658360.1658364, tseng08acc, Nesterov04}. It is interesting to see that for the two block problem \eqref{eq:2block_problem}, the acceleration scheme developed here as well as the resulting rate are not dependent on the Lipschitz constant for the gradient of the second block, since we do not require $\nabla_2 g(x_1,x_2)$ to be Lipschitz continuous.
\begin{center}
\fbox{
\begin{minipage}{5 in}
\smallskip
\centerline{\bf The A-2BSUM Algorithm}
\smallskip
At any given iteration $r>1$, do the following:\\
S1) Choose $\theta^r=\frac{2}{r+1}$;\\
S2) ${v}_1^{r}=(1-\theta^{r-1}){x}^{r-1}_1+\theta^r({w}^{r-1}_1)$;\\
S3) $x^{r}_2=\arg\min_{{x}_2\in X_2} {f}({v}^r_1, x_2)$;\\
S4) $x_1^{r}=\arg\min_{{x}_1\in X_1} u_1(x_1; v^r_1, x^r_2)$, where $u_1$ is given in \eqref{eq:u_1:acc};\\
S5) ${w}^{r}_1=x_1^{r-1}+\frac{1}{\theta^r}(x^{r}_1-x^{r-1}_1)$.\\
\end{minipage}
}
\end{center}

To conclude this section, we note that the schemes and analysis developed in this section are special in the sense that they heavily rely on the fact that $K=2$, and the resulting transformation to the single block problem. It is unclear whether the same sublinear iteration complexity holds for a general $K$ without the BSC condition, or if the algorithm can be accelerated for any $K$; see \cite{tseng08acc, Beck13} for related discussions.

\newsection{Analysis of the BCM without Per-Block Strong Convexity}\label{sec:BSC}
In this section, we consider the BCM algorithm below, which is the BSUM algorithm without using approximation for each block. We analyze its iteration complexity {\it without} the BSC assumption.
\begin{center}
\fbox{
\begin{minipage}{5.2in}
\smallskip
\centerline{\bf The Block Coordinate Minimization (BCM) Algorithm}
\smallskip
At each iteration $r+1$, pick an index set $\cC^{r+1}$; update the variable blocks by:
\begin{equation}\label{eq:BCM}
x^{r+1}_k
\left\{ \begin{array}{ll}\in \min_{x_k\in X_k}\; g\left(x_k, w^{r+1}_{-k}\right)+h_k(x_k), & \mbox{if}\; k\in\cC^{r+1};\\
=x^{r}_k, &\mbox{if}\; k\notin\cC^{r+1}.\nonumber
\end{array}\right.\\
\end{equation}
\end{minipage}
}
\end{center}

{In the absence of the BSC property, there can be multiple optimal solutions for each subproblem. This makes it tricky to establish the convergence of BCM. Specifically, in the context of the three-step analysis framework presented herein, it is difficult to bound the sufficient descent of the objective using the size of of the successive iterates (as per Lemma \ref{lm:p-descent}). In this section, we overcome this obstacle by developing several variants of the sufficient descent estimate step. We first show that BCM with MBI, G-S and E-C rules has an iteration complexity of $O(1/r)$ for problem \eqref{eq:bcd_problem} without the BSC condition. Further, we argue that for certain special classes of problem \eqref{eq:bcd_problem}, this sublinear rate can be improved in terms of the dependence on $K$ for the G-S/E-C rules. Throughout this section we will impose Assumption A.}

We first consider the MBI rule. We notice that the following is true
\begin{align}
f(x^{r})-f(x^{r+1})\stackrel{\rm (i)}\ge f(x^r)-f(\bar{x}^{r+1})\stackrel{\rm (ii)}\ge\frac{\gamma}{K}\|x^r-\hx^{r+1}\|^2,
\end{align}
{where $\bar{x}^{r+1}$ is the iterates obtained by any BSUM algorithm with MBI rule; $\hx^{r+1}$ is defined in \eqref{eq:def_hatx}. In the above expression $\rm(ii)$ can be obtained using Lemma \ref{lm:p-descent}, while $\rm(i)$ is true because we used the exact minimization in each step}. Then it is straightforward to establish, using the additional assumption that $h$ is Lipschitz continuous, the same rate stated in part (3) of Theorem \ref{thm:complexity}.

{Next we show that the BCM algorithm with the G-S and E-C rules also achieves an $\mathcal{O}(1/r)$ iteration complexity, without the BSC assumption.}

\subsection{A General Analysis for G-S and E-C rules}

The main difficulty in analyzing the BCM without the BSC is that the size of the difference of the successive iterates is no longer a good measure of the ``sufficient descent". Indeed, due to the lack of per-block strong convexity, it is possible that a block variable travels a long distance without changing the objective value (i.e., it stays in the per-block optimal solution set).


Below we analyze the iteration complexity of BCM. We need to make use of the following key inequality due to Nesterov \cite{Nesterov04}; also see \eqref{eq:g:L} for a proof. From Assumption A we know that $g$ is convex and has Lipschitz continuous gradient with constant $M$, then we must have have
\begin{align}\label{BCM_Lipschitz_Modified}
g(x)-g(v)\ge \langle \nabla g(v), x-v \rangle+\frac{1}{2M}\|\nabla g(v)-\nabla g(x)\|^2, \; \forall~v,x\in X.
\end{align}

Utilizing this inequality, the sufficient descent estimate is given by the following lemma.

\begin{lemma}
Suppose Assumption A holds. Then for BCM with either G-S rule or the E-C rule, we have that for all $r\ge 1$
\begin{equation}
\Delta^r - \Delta^{r+1} \ge \frac{1}{2M}\sum_{k=1}^K  \| \nabla g(w_k^{r+1}) - \nabla g(w_{k+1}^{r+1}) \|^2.
\end{equation}
\end{lemma}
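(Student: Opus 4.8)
The plan is to mimic the per-block ``sufficient descent'' argument from Lemma~\ref{lm:p-descent}, but since we cannot invoke per-block strong convexity of $u_k$ (here $u_k=g$, which need not be block-wise strongly convex), we replace it with the Nesterov-type inequality \eqref{BCM_Lipschitz_Modified}. Concretely, I would track the objective along the intermediate iterates $w_k^{r+1}$, $k=1,\dots,K+1$, recalling $w_1^{r+1}=x^r$ and $w_{K+1}^{r+1}=x^{r+1}$, so that $\Delta^r-\Delta^{r+1}=f(w_1^{r+1})-f(w_{K+1}^{r+1})=\sum_{k=1}^K\big(f(w_k^{r+1})-f(w_{k+1}^{r+1})\big)$. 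It therefore suffices to show, for each $k$ with $k\in\cC^{r+1}$,
\begin{equation}
f(w_k^{r+1})-f(w_{k+1}^{r+1})\ge \frac{1}{2M}\|\nabla g(w_k^{r+1})-\nabla g(w_{k+1}^{r+1})\|^2,
\end{equation}
while for $k\notin\cC^{r+1}$ both sides vanish (since then $w_k^{r+1}=w_{k+1}^{r+1}$), so the inequality holds trivially. Summing over $k$ then gives the claim.

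First I would isolate the $k$th block transition. Note that $w_k^{r+1}$ and $w_{k+1}^{r+1}$ differ only in the $k$th block: $w_{k+1}^{r+1}$ has $x_k^{r+1}$ in place of $x_k^{r}$, with all other coordinates identical (call the common ``environment'' $w_{-k}^{r+1}$). Then
\begin{align}
f(w_k^{r+1})-f(w_{k+1}^{r+1})&=\Big(g(w_k^{r+1})+h_k(x_k^r)\Big)-\Big(g(w_{k+1}^{r+1})+h_k(x_k^{r+1})\Big)\nonumber\\
&\ge g(w_k^{r+1})-g(w_{k+1}^{r+1})-\langle\zeta_k^{r+1},x_k^r-x_k^{r+1}\rangle\nonumber
\end{align}
for any subgradient $\zeta_k^{r+1}\in\partial h_k(x_k^{r+1})$, using the subgradient inequality $h_k(x_k^r)-h_k(x_k^{r+1})\ge\langle\zeta_k^{r+1},x_k^r-x_k^{r+1}\rangle$. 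Applying \eqref{BCM_Lipschitz_Modified} with $x=w_k^{r+1}$, $v=w_{k+1}^{r+1}$ gives $g(w_k^{r+1})-g(w_{k+1}^{r+1})\ge\langle\nabla g(w_{k+1}^{r+1}),w_k^{r+1}-w_{k+1}^{r+1}\rangle+\frac{1}{2M}\|\nabla g(w_k^{r+1})-\nabla g(w_{k+1}^{r+1})\|^2$, and since $w_k^{r+1}-w_{k+1}^{r+1}$ has only its $k$th block nonzero (equal to $x_k^r-x_k^{r+1}$), the inner product equals $\langle\nabla_k g(w_{k+1}^{r+1}),x_k^r-x_k^{r+1}\rangle$. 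Combining, the descent is bounded below by $\langle\nabla_k g(w_{k+1}^{r+1})+\zeta_k^{r+1},x_k^r-x_k^{r+1}\rangle+\frac{1}{2M}\|\nabla g(w_k^{r+1})-\nabla g(w_{k+1}^{r+1})\|^2$.

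The crux is then to dispatch the cross term: I claim $\langle\nabla_k g(w_{k+1}^{r+1})+\zeta_k^{r+1},x_k^r-x_k^{r+1}\rangle\ge 0$. This is exactly the first-order optimality condition for the $k$th subproblem: $x_k^{r+1}$ minimizes $g(x_k,w_{-k}^{r+1})+h_k(x_k)$ over $x_k\in X_k$, so there exists $\zeta_k^{r+1}\in\partial h_k(x_k^{r+1})$ with $\langle\nabla_k g(w_{k+1}^{r+1})+\zeta_k^{r+1},x_k-x_k^{r+1}\rangle\ge 0$ for all $x_k\in X_k$; taking $x_k=x_k^r\in X_k$ gives the sign. (One should fix this particular $\zeta_k^{r+1}$ throughout, which is legitimate since it appears only via the optimality condition and the subgradient inequality.) Dropping the nonnegative cross term yields the per-block bound, and summing over $k=1,\dots,K$ completes the proof; the E-C case is identical since the inactive blocks contribute zero on both sides. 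The main obstacle worth being careful about is not a deep one but a bookkeeping one: keeping straight which iterate each gradient is evaluated at, and ensuring the same subgradient $\zeta_k^{r+1}$ is used in both the subgradient inequality and the optimality condition — the argument is essentially the chain \eqref{eq:sufficient_descent} with the strong-convexity term $\frac{\gamma_k}{2}\|\cdot\|^2$ replaced by the gradient-difference term coming from \eqref{BCM_Lipschitz_Modified}.
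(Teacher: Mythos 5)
Your proposal is correct and follows essentially the same route as the paper's own proof: decompose the descent over the intermediate iterates $w_k^{r+1}$, apply the Nesterov-type inequality \eqref{BCM_Lipschitz_Modified} to each single-block transition, and kill the cross term via the optimality condition of the $k$th subproblem. Your only addition is to make the subgradient $\zeta_k^{r+1}$ and the variational-inequality form of the optimality condition explicit, which the paper leaves implicit in its step (ii).
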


{\bf{Proof.}} Suppose that $k\notin \cC^{r+1}$, then we have the following trivial inequality
\begin{equation}
f(w_k^{r+1}) - f(w_{k+1}^{r+1}) \ge \frac{1}{2M} \| \nabla g(w^{r+1}_k)-\nabla g(w^{r+1}_{k+1}) \|^2
\end{equation}
as both sides of the inequality are zero.

Suppose $k\in \cC^{r+1}$. Then by \eqref{BCM_Lipschitz_Modified}, we have that
\begin{align}
& f(w^{r+1}_{k})-f(w^{r+1}_{k+1})\nonumber \\
& \ge \langle \nabla g(w^{r+1}_{k+1}), w^{r+1}_{k}- w^{r+1}_{k+1}\rangle+h(x_k^{r})-h(x_k^{r+1})+\frac{1}{2M}\|\nabla g(w^{r+1}_k)-\nabla g(w^{r+1}_{k+1}) \|^2\nonumber\\
& \stackrel{\rm(i)}\ge \langle \nabla_k g(w^{r+1}_{k+1}), x^{r}_{k}- x^{r+1}_k\rangle+h_k(x_k^{r})-h_k(x_k^{r+1})+\frac{1}{2M}\|\nabla g(w^{r+1}_k)-\nabla g(w^{r+1}_{k+1}) \|^2\nonumber\\
&\stackrel{\rm(ii)}\ge\frac{1}{2M}\|\nabla g(w^{r+1}_k)-\nabla g(w^{r+1}_{k+1}) \|^2
\end{align}
where $\rm{(i)}$ is because $w^{r+1}_{k+1}$ and $w^{r+1}_k$ only differs by a single block; $\rm {(ii)}$ is due to the optimality of $x^{t+1}_{k}$.
Summing over $k$, we have
\begin{equation}
f(x^r)-f(x^{r+1}) \ge  \sum_{k=1}^K \frac{1}{2M}\| \nabla g(w_k^{r+1}) - \nabla g(w_{k+1}^{r+1}) \|^2.
\end{equation}
This completes the proof of this lemma. \hfill {\bf{Q.E.D.}}

\begin{lemma}\label{lemma:cost-to-to_bcm}
Suppose Assumptions A is satisfied. Then
\begin{enumerate}
\item For the BCM with the G-S update rule, we have
$$
(\Delta^{r+1})^2 \le 2 K^2 R^2 \sum_{k=1}^{K}\|\nabla g(w_{k+1}^{r+1})-\nabla g(w^{r+1}_{k})\|^2, \; \forall x^*\; \in X^*.
$$
\item For the BCM with the period-T E-C update rule, we have
$$
(\Delta^{r+T})^2 \le 2 T K^2 R^2 \sum_{k=1}^{K}\sum_{t=1}^{T}\|\nabla g(w_{k+1}^{r+t})-\nabla g(w^{r+t}_{k})\|^2, \; \forall\; x^* \in X^*.
$$
\end{enumerate}
\end{lemma}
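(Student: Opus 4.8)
The plan is to mimic the cost-to-go estimate of Lemma \ref{lemma:clost_to_go}, but replacing every appearance of $\nabla u_k(x_k^{r+1}; w_k^{r+1})$ by $\nabla_k g(w_{k+1}^{r+1})$, since in BCM the $k$th subproblem is the exact minimization of $g(x_k, w_{-k}^{r+1}) + h_k(x_k)$, and hence the optimality condition reads $\langle \nabla_k g(w_{k+1}^{r+1}) + \zeta_k^{r+1}, x_k^{r+1} - x_k^*\rangle \le 0$ for some $\zeta_k^{r+1} \in \partial h_k(x_k^{r+1})$. First I would write, using convexity of $g$,
\[
f(x^{r+1}) - f(x^*) \le \langle \nabla g(x^{r+1}), x^{r+1}-x^*\rangle + h(x^{r+1}) - h(x^*) = \sum_{k=1}^K \Big( \langle \nabla_k g(x^{r+1}) - \nabla_k g(w_{k+1}^{r+1}), x_k^{r+1} - x_k^* \rangle + \langle \nabla_k g(w_{k+1}^{r+1}), x_k^{r+1}-x_k^*\rangle + h_k(x_k^{r+1}) - h_k(x_k^*) \Big),
\]
and then discard the last two terms of each summand using the optimality condition above (exactly as in \eqref{eq:negative}). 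This leaves $f(x^{r+1})-f(x^*) \le \sum_{k=1}^K \langle \nabla_k g(x^{r+1}) - \nabla_k g(w_{k+1}^{r+1}), x_k^{r+1}-x_k^*\rangle$.

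Next I would bound each inner product by $\|\nabla_k g(x^{r+1}) - \nabla_k g(w_{k+1}^{r+1})\| \cdot \|x_k^{r+1}-x_k^*\| \le \|\nabla g(x^{r+1}) - \nabla g(w_{k+1}^{r+1})\| \cdot R$. The key remaining point is to control $\|\nabla g(x^{r+1}) - \nabla g(w_{k+1}^{r+1})\|$ by a telescoping sum of the per-block gradient differences $\|\nabla g(w_{j+1}^{r+1}) - \nabla g(w_j^{r+1})\|$: since $x^{r+1} = w_{K+1}^{r+1}$ and the indices run from $k+1$ up to $K+1$, a triangle inequality gives $\|\nabla g(x^{r+1}) - \nabla g(w_{k+1}^{r+1})\| \le \sum_{j=k+1}^{K} \|\nabla g(w_{j+1}^{r+1}) - \nabla g(w_j^{r+1})\| \le \sum_{j=1}^{K}\|\nabla g(w_{j+1}^{r+1}) - \nabla g(w_j^{r+1})\|$. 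Squaring the resulting bound on $f(x^{r+1})-f(x^*)$, pulling out $R$, and applying Cauchy--Schwarz to absorb the factor $K$ from the outer sum over $k$ and the factor $K$ from the triangle-inequality sum over $j$ yields $(\Delta^{r+1})^2 \le 2K^2 R^2 \sum_{k=1}^K \|\nabla g(w_{k+1}^{r+1}) - \nabla g(w_k^{r+1})\|^2$; the constant $2$ is slack I would keep for uniformity with part (2). Part (1) is then complete.

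For part (2) I would follow the E-C argument of Lemma \ref{lemma:clost_to_go}(2): define $r_k$ as the last iteration up to $r+T$ at which block $k$ was updated, so that $x_k^{r+T} = x_k^{r_k}$ and the optimality condition for block $k$ holds with $\nabla_k g(w_{k+1}^{r_k})$. One gets $f(x^{r+T})-f(x^*) \le \sum_{k=1}^K \langle \nabla_k g(x^{r+T}) - \nabla_k g(w_{k+1}^{r_k}), x_k^{r+T}-x_k^*\rangle$, and now $\|\nabla g(x^{r+T}) - \nabla g(w_{k+1}^{r_k})\|$ must be telescoped across all the intermediate iterates from $r_k$ to $r+T$; this chains together at most $TK$ per-block gradient differences of the form $\|\nabla g(w_{j+1}^{r+t}) - \nabla g(w_j^{r+t})\|$, $1\le j\le K$, $1\le t\le T$. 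Squaring, using $\|x_k^{r+T}-x_k^*\|\le R$, and applying Cauchy--Schwarz over the $TK$ terms (and then over the outer sum over $k$) produces the claimed bound $(\Delta^{r+T})^2 \le 2TK^2 R^2 \sum_{k=1}^K \sum_{t=1}^T \|\nabla g(w_{k+1}^{r+t}) - \nabla g(w_k^{r+t})\|^2$.

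The main obstacle is the bookkeeping in the telescoping step for the E-C rule: one must verify that the consecutive iterates $w_j^{r+t}$ appearing between $w_{k+1}^{r_k}$ and $x^{r+T}$ indeed differ by a single block at each step (so that the gradient difference is a genuine per-block quantity), and that the total count of such differences is no more than $TK$, which is what determines the constants $T$ and $K^2$. Everything else — convexity, the optimality condition, Cauchy--Schwarz — is routine and parallels Lemma \ref{lemma:clost_to_go} verbatim.
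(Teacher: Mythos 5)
Your proposal matches the paper's own proof essentially step for step: convexity of $g$ plus the exact-minimization optimality condition at $w_{k+1}^{r+1}$ (resp.\ $w_{k+1}^{r_k}$) to reduce the gap to $\sum_k \langle \nabla_k g(x^{r+1})-\nabla_k g(w_{k+1}^{r+1}), x_k^{r+1}-x_k^*\rangle$, followed by the triangle-inequality telescoping of the full gradient across the intermediate iterates $w_j^{r+t}$ and a Cauchy--Schwarz step to produce the $K^2$ (and $T$) factors. The bookkeeping you flag for the E-C case is handled in the paper exactly as you describe, via the index $r_k$ of the last update of block $k$ before $r+T$, so there is no gap.
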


{\bf{Proof.}} 
We only show the second part of the claim, as the proof for the first part is simply a special case. Define a new index set $\{r_k\}$ as in \eqref{eq:r_k}. 
Recall that we have $x_k^{r_k} = x_k^{r+T}$, for all $k$.
We have the following series of inequalities
\begin{align}
&f(x^{r+T})-f(x^*)\nonumber\\
&\le\sum_{k=1}^{K} \langle \nabla_k g(x^{r+T}), x_k^{r+T}-x_k^*\rangle +\sum_{k=1}^{K} h_k(x_k^{r_k})-h_k(x_k^{*})\nonumber\\
&=\sum_{k=1}^{K} \langle \nabla_k g(x^{r+T}) - \nabla_k g(w^{r_k}_{k+1}), x_k^{r+T}-x_k^*\rangle + \langle \nabla_k g(w^{r_k}_{k+1}), x_k^{r+T}-x_k^*\rangle +h_k(x_k^{r_k})-h_k(x_k^{*})\nonumber\\
&\stackrel{\rm (i)}\le \sum_{k=1}^{K} \langle \nabla_k g(x^{r+T})-\nabla_k g(w^{r_k}_{k+1}), x_k^{r+T}-x_k^*\rangle \nonumber\\
&\le \sum_{k=1}^{K} \|\nabla g(x^{r+T})-\nabla g(w^{r_k}_{k+1})\| \|x_k^{r+T}-x_k^*\|\nonumber\\
&\le \sum_{k=1}^{K} \sum_{t=1}^T \sum_{j=1}^K \|\nabla g(w^{r+t}_{j+1})-\nabla g(w^{r+t}_{j})\| \|x_k^{r+T}-x_k^*\|\nonumber\\
&\le \sum_{t=1}^T  \sum_{j=1}^K\|\nabla g(w_{j+1}^{r + t })-\nabla g(w^{r + t}_{j})\| \sum_{k=1}^{K} \|x_k^{r+T}-x_k^*\|\nonumber
\end{align}
where in ${\rm (i)}$ we have used the optimality of $x_k^{r_k}$ and $x_k^{r_k} = x_k^{r+T}$, for all $k$.
Then taking the square on both sides, we obtain
\begin{align}
&\left( f(x^{r+T})-f(x^*) \right)^2
{{\le  T K^2 R^2 \sum_{t=1}^T \sum_{k=1}^K \| \nabla g(w_{k+1}^{r+t})-\nabla g(w_k^{r+t}) \|^2}}.
\end{align}
The proof is complete. \QED

Combining these two results, and utilizing the technique in Theorem \ref{thm:complexity}, we readily have the following main result for BCM.

\begin{theorem}\label{thm:complexity:bcm}
Suppose Assumption A holds true. We have the following.
\begin{enumerate}
\item Let $\{x^r\}$ be the sequence generated by the BCM algorithm with G-S rule. Then we have
\begin{align}
\Delta^r=f(x^r)-f^*\le \frac{c_5}{\sigma_5}\frac{1}{r}, \; \forall~r\ge 1,
\end{align}
where the constants are given below
\begin{align}
{c}_5&=\max\{4\sigma_5-2, f(x^1)-f^*,2\}\nonumber,\\
\sigma_5&=\frac{1}{2MK^2R^2},\quad \label{eq:sigma5}
\end{align}
\item Let $\{x^r\}$ be the sequence generated by the BCM algorithm with E-C rule. Then we have
\begin{align}
\Delta^r=f(x^r)-f^*\le \frac{c_6}{\sigma_6}\frac{1}{r-T}, \; \forall~r>T,
\end{align}
where the constants are given below
\begin{align}
{c}_6&=\max\{4\sigma_6-2, f(x^1)-f^*,2\}\nonumber,\\
\sigma_6&=\frac{1}{ 2 K^2 T R^2 M}.
\end{align}
\end{enumerate}

\end{theorem}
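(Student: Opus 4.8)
The plan is to combine the two preceding lemmas into a single quadratic recursion on the optimality gap $\Delta^r$, and then to re-use, essentially verbatim, the scalar induction argument already carried out in the proof of Theorem~\ref{thm:complexity}. No new analytic ingredient is needed beyond the Sufficient Descent lemma and the Cost-to-go estimate (Lemma~\ref{lemma:cost-to-to_bcm}); the theorem is the ``third step'' S3 of the three-step framework.

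Concretely, for the G-S rule I would take the sufficient-descent bound $\Delta^r-\Delta^{r+1}\ge \frac{1}{2M}\sum_{k=1}^K\|\nabla g(w_k^{r+1})-\nabla g(w_{k+1}^{r+1})\|^2$ together with Lemma~\ref{lemma:cost-to-to_bcm}(1), namely $(\Delta^{r+1})^2\le 2K^2R^2\sum_{k=1}^K\|\nabla g(w_{k+1}^{r+1})-\nabla g(w_k^{r+1})\|^2$, and eliminate the common sum of squared gradient differences. This yields $\Delta^r-\Delta^{r+1}\ge \sigma_5(\Delta^{r+1})^2$ for all $r\ge 1$ with $\sigma_5$ of order $1/(MK^2R^2)$, equivalently $\sigma_5(\Delta^{r+1})^2+\Delta^{r+1}\le\Delta^r$; from here the derivation is identical to part~(1) of Theorem~\ref{thm:complexity} (a base-case estimate $\Delta^2\le c_5/(2\sigma_5)$ via the quadratic formula using $\Delta^1\le c_5$, then an induction showing $\Delta^r\le c_5/(\sigma_5 r)$, invoking only $c_5\ge 2$ and $r\ge 2$). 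Monotonicity $\Delta^{r+1}\le\Delta^r$, which is all the extra structure the recursion needs, follows from exact block minimization. For the period-$T$ E-C rule I would telescope the descent bound over a window of $T$ iterations, $\Delta^r-\Delta^{r+T}=\sum_{t=1}^T(\Delta^{r+t-1}-\Delta^{r+t})\ge \frac{1}{2M}\sum_{t=1}^T\sum_{k=1}^K\|\nabla g(w_k^{r+t})-\nabla g(w_{k+1}^{r+t})\|^2$, pair it with Lemma~\ref{lemma:cost-to-to_bcm}(2), and again eliminate the double sum to get $\Delta^r-\Delta^{r+T}\ge\sigma_6(\Delta^{r+T})^2$; running the same scalar argument along each residue class modulo $T$ and using monotonicity to interpolate between classes produces $\Delta^r\le c_6/(\sigma_6(r-T))$ for $r>T$, exactly as in part~(2) of Theorem~\ref{thm:complexity}.

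The substantive difficulty has, in fact, already been absorbed into Lemma~\ref{lemma:cost-to-to_bcm}: without per-block strong convexity, $\|x^{r+1}-x^r\|$ is no longer a usable surrogate for progress (a block may wander inside its per-block optimal set), so progress must be measured through the gradient-difference norms $\|\nabla g(w_{k+1}^{r+1})-\nabla g(w_k^{r+1})\|$, and the conversion from function decrease to gradient decrease relies on the Nesterov inequality \eqref{BCM_Lipschitz_Modified}. Given those lemmas, the only points requiring care in the theorem itself are the E-C bookkeeping with the ``last update time'' index set $\{r_k\}$ of \eqref{eq:r_k} and the alignment of the length-$T$ telescoping window so that every block is touched; after that, writing out the explicit constants $c_5,\sigma_5,c_6,\sigma_6$ is routine.
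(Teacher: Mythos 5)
Your proposal matches the paper's own argument: the paper proves Theorem~\ref{thm:complexity:bcm} exactly by combining the gradient-difference sufficient-descent lemma with Lemma~\ref{lemma:cost-to-to_bcm} to obtain $\Delta^r-\Delta^{r+1}\ge\sigma_5(\Delta^{r+1})^2$ (and its $T$-window analogue for the E-C rule), then invoking verbatim the scalar induction from Theorem~\ref{thm:complexity}. Your identification of the cost-to-go lemma as the place where the real difficulty (replacing $\|x^{r+1}-x^r\|$ by gradient differences via the Nesterov inequality) is absorbed is also exactly the paper's point of view.
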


\subsection{Special Case: The Constrained Nonsmooth Composite Problem}
{The rate derived in the previous subsection is inversely proportional to $K^2$, which is worse than most of the rates derived so far for problems with the BSC assumption. In the following two subsections we sharpen the above results for two special problems of \eqref{eq:bcd_problem}.}

We first make the following additional assumption on the smooth part of the problem \eqref{eq:bcd_problem} (besides Assumption A). Suppose that $g(x)$ takes the following form
\begin{align}\label{eq:g_composite}
g(x)&=\sum_{i=1}^{I}g^i(x_1, x_2,\cdots, x_K)+\sum_{k=1}^{K}b^T_k x_k\nonumber\\
&=\sum_{i=1}^{I}\ell^i(A^i_{1}x_1, A^i_{2}x_2,\cdots, A^i_{k} x_K)+\sum_{k=1}^{K}b^T_k x_k
\end{align}
where the smooth function $\ell^i(y^{i}_{1},\cdots, y^{i}_{K})$ is a {\it composite} of a strongly convex function and a linear mapping. Specifically, $\ell^i(\cdot)$ satisfies the following conditions.
 \pn {\bf Assumption E.}
\begin{itemize}
\item[(a)] $\ell^i(\cdot)$ is strongly convex with respect to each block variable $y^i_{k}$, with $\eta^i_{k}$ as the modulus.
\item[(b)] $\nabla_k\ell^i(y^i_{k},y^i_{-k})$ is Lipschitz continuous for all feasible $y^i_{k}$,
    \begin{align}
    \|\nabla_k\ell^i(y^i_{k},y^i_{-k})-\nabla_k\ell^i(y^i_k,\tilde{y}^i_{-k})\|\le P^i_k\|y^i_{-k}-\tilde{y}^i_{-k}\|,\quad\forall~y^i_k, \; y^i_{-k}, \; \tilde{y}^i_{-k},
    \end{align}
where $P^i_{k}$ is the Lipschitz constant.
\end{itemize}
Note that the smooth part $g(x)$ may not be strongly convex with respect to any block $x_k$, as $A^i_k$'s can be rank deficient. Two simple examples covered by this family of problems are provided below.

\begin{example}
The sparse logistic regression (SLR) problem with a compact feasible set and the group LASSO problem are special cases of problem \eqref{eq:bcd_problem} with composite smooth function in the form of \eqref{eq:g_composite}. More specifically, these problems have the following objective functions, respectively:
\begin{align}
f^{\rm \mbox{SLR}}(x)&=\sum_{i=1}^{I}\log\left(1+\exp(-y_i a^T_i x)\right)+\nu \|x\|_1,\nonumber\\
f^{\rm \mbox{G-LASSO}}(x)&=\left\|\sum_{k=1}^{K}A_k x_k-b\right\|^2+\sum_{k=1}^{K}\nu_k \|x_k\|_2.\nonumber
\end{align}
For the SLR problem, $\nu\ge 0$ is the penalty coefficient; $I$ is the total number of observations; $y_i\in\mathbb{R}$ is the $i$-th observation; $a_i\in\mathbb{R}^{n}$ is the $i$-th data point. For the group LASSO problem, $\{\nu_k\ge0\}$ are the penalty coefficients; each $A_k\in\mathbb{R}^{m\times n}$ is a data matrix not necessarily having full column rank; and $b\in\mathbb{R}^{m}$ is the observation vector.
\end{example}


Our analysis consists of similar three main steps as before.
To simplify presentation, below we only show the analysis and result for BCM with G-S update rule.
We first show the sufficient descent property. By using the short-handed notation:
$$A^{i}_{-k}w^{r+1}_{-k}:=[A^{i}_1 x^{r+1}_1, \cdots, A^{i}_{k-1} x^{r+1}_{k-1}, A^{i}_{k+1} x^{r}_{k+1}, \cdots A^{i}_{K} x^{r}_{K}], $$
we have the following series of inequalities
\begin{align}\label{eq:sufficient_descent_BCM_GS}
&f(x^{r}_k, w^{r+1}_{-k})-f(x^{r+1}_k, w^{r+1}_{-k})\nonumber\\
&\stackrel{{\rm (i)}}\ge \sum_{i=1}^{I}\left(\langle\nabla_k \ell^{i}(A^i_kx^{r+1}_k, A^{i}_{-k}w^{r+1}_{-k}), A^i_k (x^{r}_k-x^{r+1}_k)\rangle+\frac{\eta^i_k}{2}\|A^i_k(x^{r+1}_k-x^r_k)\|^2\right)\nonumber\\
&\quad\quad\quad+\langle b_k, x^r_k-x^{r+1}_k\rangle+h_k(x^r_k)-h_{k}(x^{r+1}_k)\nonumber\\
&\stackrel{{\rm (ii)}}= \langle\nabla_k g(w^{r+1}_{k+1}), x^{r}_k-x^{r+1}_k\rangle+h_k(x^r_k)-h_{k}(x^{r+1}_k)+\sum_{i=1}^{I}\frac{\eta^i_k}{2}\|A^i_k(x^{r+1}_k-x^r_k)\|^2\nonumber\\
&\stackrel{{\rm (iii)}}\ge \sum_{i=1}^{I}\frac{\eta^i_k}{2}\|A^i_k(x_k^r-x_k^{r+1})\|^2,\nonumber
\end{align}
where in ${{\rm (i)}}$ we have used the strong convexity property of $\ell^i(\cdot)$; in ${{\rm (ii)}}$ we have used the property that $\nabla_k g^i(w^{r+1}_{k+1})=(A^i_k)^T\nabla_k \ell^{i}(A^i_kx^{r+1}_k, A^{i}_{-k}w^{r+1}_{-k})$; in ${{\rm (iii)}} $ we have used the optimality of $x^{r+1}_k$.

As a result we have
\begin{align}
f(x^{r})-f(x^{r+1})\ge \frac{1}{2}\sum_{i=1}^{I}\sum_{k=1}^{K}\eta^i_k\|A^i_k(x_k^r-x_k^{r+1})\|^2.
\end{align}
It is important to note that the sufficient descent estimate described above is measured by the size of the linearly transformed version of the successive difference of the iterates, as opposed to the size of the successive difference of the iterates given in Lemma \ref{lm:p-descent}.

Next let us show the cost-to-go estimate. First note that when $g(x)$ is the composite function described above, we have
\begin{align}
&\|\nabla_k g(x^{r+1})-\nabla_k g(x_k^{r+1}, w^{r+1}_{-k})\|\nonumber\\
&=\left\|\sum_{i=1}^{I}(A_k^i)^T \left(\nabla_k\ell^i(A^i_1 x_1^{r+1},\cdots, A^i_K x_K^{r+1})-\nabla_k\ell^i(A^i_k x^{r+1}_k, A^{i}_{-k}w^{r+1}_{-k})\right)\right\|\nonumber\\
&\le \sum_{i=1}^{I}\sqrt{\left\|A^i_k (A^i_k)^T\right\|}P^i_k\sqrt{\sum_{j=1}^{K}\|A^i_j(x_j^{r+1}-x_j^{r})\|^2}.\nonumber
\end{align}

We have the following series of inequalities
\begin{align}\label{eq:cost_to_go_BCM_GS_1}
f(x^{r+1})-f(x^*)&\le \langle\nabla g(x^{r+1}), x^{r+1}-x^*\rangle+h(x^{r+1})-h(x^*)\nonumber\\
&=\sum_{k=1}^{K}\langle \nabla_k g(x^{r+1})-\nabla_k g(x^{r+1}_k, w^{r+1}_{-k}), x^{r+1}_k-x^*_k\rangle\nonumber\\
&\quad\quad{+\sum_{k=1}^{K}\langle \nabla_k g(x^{r+1}_k, w^{r+1}_{-k}), x^{r+1}_k-x^*_k\rangle+h(x^{r+1})-h(x^*)}\nonumber\\
&\le \sum_{k=1}^{K}\langle \nabla_k g(x^{r+1})-\nabla_k g(x^{r+1}_k, w^{r+1}_{-k}), x^{r+1}_k-x^*_k\rangle\nonumber\\
&\le \sum_{i=1}^{I}\sqrt{\sum_{j=1}^{K}\|A^i_j(x_j^{r+1}-x_j^{r})\|^2}\sum_{k=1}^{K}\sqrt{\left\|A^i_k (A^i_k)^T\right\|}P^i_k\|x^{r+1}_k-x^*_k\|
\end{align}
{where the second inequality is true due to the optimality of $x^{r+1}_k$.}


Squaring both sides of \eqref{eq:cost_to_go_BCM_GS_1} we obtain
\begin{align}
{(f(x^{r+1})-f(x^*))^2}\le  K I R^2 \max_{k,i}\|A^i_k (A^i_k)^T\|_2(P^i_k)^2 \sum_{i=1}^{I}\sum_{k=1}^{K}\|A^i_k(x^{r+1}_k-x^{r}_k)\|^2. \label{eq:cost_to_go_BCM_GS}
\end{align}

Then by the similar argument as in Theorem \ref{thm:complexity}, we have the following result.

\begin{corollary}\label{cor:complexity}
{Suppose $g(\cdot)$ takes the composite form as expressed in \eqref{eq:g_composite}. Further suppose Assumption A and E hold true.} Let $\{\bx^r\}$ be the sequence generated by the BCM algorithm with G-S rule. Then we have
\begin{align}
\Delta^r=f(\bx^r)-f^*\le \frac{c_7}{\sigma_7}\frac{1}{r}
\end{align}
where
\begin{align}
\sigma_7:&=\frac{\min_{k,j}\eta_{k}^{j}}{2 K I R^2 \max_{k,i}\|A^i_k (A^i_k)^T\|_2(P^i_k)^2 }, \quad
{c}_7:=\max\{4\sigma_7-2, f(\bx^1)-f^*,2\}\nonumber.
\end{align}
\end{corollary}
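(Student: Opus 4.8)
The plan is to combine the two estimates that immediately precede the statement --- the sufficient-descent bound
$$f(x^{r})-f(x^{r+1})\ge \frac12\sum_{i=1}^{I}\sum_{k=1}^{K}\eta^i_k\|A^i_k(x^r_k-x^{r+1}_k)\|^2$$
and the squared cost-to-go estimate \eqref{eq:cost_to_go_BCM_GS} --- into a single scalar recursion on the optimality gap $\Delta^r=f(x^r)-f^*$, and then to invoke the elementary sequence argument already used in the proof of Theorem \ref{thm:complexity}.

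First I would lower-bound the right-hand side of the descent inequality by $\frac12(\min_{k,j}\eta^j_k)\sum_{i,k}\|A^i_k(x^r_k-x^{r+1}_k)\|^2$ and rewrite \eqref{eq:cost_to_go_BCM_GS} as
$$\sum_{i=1}^{I}\sum_{k=1}^{K}\|A^i_k(x^{r+1}_k-x^r_k)\|^2\ \ge\ \frac{(\Delta^{r+1})^2}{K I R^2\,\max_{k,i}\|A^i_k(A^i_k)^T\|_2 (P^i_k)^2}.$$
Substituting the second display into the first yields
$$\Delta^{r}-\Delta^{r+1}\ \ge\ \sigma_7\,(\Delta^{r+1})^2,\qquad \sigma_7=\frac{\min_{k,j}\eta^j_k}{2\,K I R^2\max_{k,i}\|A^i_k(A^i_k)^T\|_2(P^i_k)^2},$$
which is precisely the recursion \eqref{eq:key_recursion} with $\sigma_1$ replaced by $\sigma_7$.

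Second, I would reuse the induction from the proof of Theorem \ref{thm:complexity}: starting from $\Delta^1\le c_7$ and the equivalent quadratic inequality $\sigma_7(\Delta^{r+1})^2+\Delta^{r+1}\le\Delta^r$, one solves the quadratic to obtain $\Delta^2\le c_7/(2\sigma_7)$ (treating the cases $4\sigma_7-1\ge 0$ and $4\sigma_7-1<0$ separately and using $c_7\ge 4\sigma_7-2$), and then shows that $\Delta^r\le c_7/(r\sigma_7)$ implies $\Delta^{r+1}\le c_7/((r+1)\sigma_7)$ using $c_7\ge 2$ and $r\ge 2$. This closes the induction and gives the claimed $\mathcal{O}(1/r)$ bound with the stated constants.

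I do not expect a genuine obstacle here, since both nontrivial estimates are already established before the statement; the only point that needs care --- and the conceptual reason the argument succeeds without the BSC assumption --- is that the descent and the cost-to-go must be controlled by the \emph{same} quantity, namely $\sum_{i,k}\|A^i_k(x^r_k-x^{r+1}_k)\|^2$ rather than $\|x^r-x^{r+1}\|^2$. Because it is this transformed residual, not the raw iterate gap, that both steps produce, the possible rank deficiency of the $A^i_k$'s is harmless: it is never inverted, and the strong-convexity moduli $\eta^i_k$ of the $\ell^i$ in the $y$-variables, together with $P^i_k$ and $\|A^i_k(A^i_k)^T\|_2$, are all that enters $\sigma_7$.
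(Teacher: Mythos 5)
Your proposal is correct and follows exactly the route the paper takes: it combines the sufficient-descent estimate and the squared cost-to-go bound \eqref{eq:cost_to_go_BCM_GS} into the recursion $\Delta^{r}-\Delta^{r+1}\ge\sigma_7(\Delta^{r+1})^2$ and then reuses the induction from Theorem \ref{thm:complexity}, which is precisely what the paper means by ``by the similar argument as in Theorem \ref{thm:complexity}.'' Your closing observation---that both estimates are controlled by the same transformed residual $\sum_{i,k}\|A^i_k(x^r_k-x^{r+1}_k)\|^2$, so rank deficiency of the $A^i_k$'s never matters---is also the correct reason the argument works without BSC.
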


%
%
%
%

{\begin{remark}
Compared with what we have derived in Theorem \ref{thm:complexity:bcm}, the rate here is explicitly dependent on various problem parameters, hence can be sharpened in certain cases. As an example, consider the simple case where $I=1$ and $\ell(\cdot)=\frac{1}{2}\|\cdot\|^2$. For this problem the Lipschitz continuity constant for the entire smooth part is $M=\|A A^T\|_2$, where $A = [A_1, \cdots, A_K]$. Further we have $P^i_k=\sqrt{K}$, $\eta^i_k=1$ for all $k,i$. When $\|A_k A^T_k\|$'s are approximately the same for all $k$, $\max_{k}\|A_k (A_k)^T\|_2$ is approximately $\frac{1}{K}\|A A^T\|_2$.  This implies that ${\sigma}_7$ is upper bounded by $\frac{1}{2 K R^2 M }$ , which is $K$ times greater than $\sigma_5$ given in \eqref{eq:sigma5}.

\end{remark}}

{
\begin{remark}
Our analysis above implies that when using the BCM (or equivalently the IWFA algorithm \cite{yu04})   
to solve the rate optimization problem given in Example \ref{ex:wireless}, a sublinear rate can be obtained regardless of the rank of the channel matrices $\{H_k\}$. To see this, we first check Assumption E-(a).  Denote $X_k:=I_{n_r}+\sum_{j\ne k} H_j C_j H^T_j\succ 0$, then the $k$th subproblem can be reformulated as
\begin{align}
\min_{C_k}-\log\left(|X_k|\left|I_{n_t}+H_k^T X_k^{-1} H_k C_k\right|\right), \quad\st \; C_k\succeq 0, \; \trace[C_k]\le P_k.
\end{align}
Clearly for any feasible choice of $\{C_j\}_{j\ne k}$, we must have
$$|X_k|>0,\quad 0<\left\|H^T_k\left(I+\sum_{j\ne k}P_jH_j H^T_j\right)^{-1} H_k\right\|_2 \le \|H_k^T X_k^{-1} H_k\|_2\le \|H^T_k H_k\|_2.$$
This says that the problem is strongly convex with respect to $H_k^T X_k^{-1} H_k C_k$. It is also easy to verify that the Lipschitz continuous assumption E-(b) is also satisfied; see for example a related discussion in \cite[Section V-A]{scutari13decomposition}. Then Corollary \ref{cor:complexity} implies that IWFA converges in a rate $O(1/r)$, regardless of the rank of the channel matrices. Prior to our work, no convergence rate analysis has been done for the IWFA when solving problem \eqref{eq:MAC}.
\end{remark}}

\subsection{Special Case: The Constrained Nonsmooth L2-SVM Problem}
In this subsection, we assume that $g(x)$ takes the following form (besides Assumption A)
\begin{align}\label{eq:l2-svm}
g(x)=\sum_{i=1}^{I}g_i(x)=\sum_{i=1}^{I}\left[(1-x^T a_i)^+\right]^2=\sum_{i=1}^{I}\left[(1-\sum_{k=1}^{K}x_k^T a_{i,k})^+\right]^2
\end{align}
where $(y)^+$ means $\max\{0,y\}$; $a_{i,k}\in\mathbb{R}^{n_k}$ denotes a subvector of $a_i$ that corresponds to the block $x_k$. This objective is known as the L2 SVM loss. It is easy to observe that the problem is not strongly convex with respect to any block variable $x_k\in\mathbb{R}^{n_k}$. Moreover it is also not a special case of the problems considered in the previous subsection.

To proceed let us define the following short-handed notations:
\begin{align}
\ell(y):=\|y\|^2, \quad q_i(x):=(1-x^T a_i)^+ \ge 0. \nonumber
\end{align}
Using these definitions, we have $g_i(x)=\ell(q_i(x))$.

For simplicity, let us consider the BCM scheme with G-S update rule, in which the $k$th block is updated by
\begin{align}
x^{r+1}_k\in \arg\min_{x_k\in X_k} \sum_{i=1}^{I}\ell\left(q_i(x_k,w^{r+1}_{-k})\right)+h_k(x_k).
\end{align}
Moreover, we note that
\begin{align}
\nabla_k g_i(x)=-2a_{i,k} q_i(x),\quad\nabla \ell(y)=2y.
\end{align}
We can obtain the following series of inequalities
\begin{align}
&f(x^r_k, w^{r+1}_{-k})-f(x^{r+1}_k, w^{r+1}_{-k})\nonumber\\
&=f(w^{r+1}_{k})-f(w^{r+1}_{k+1})\nonumber\\
&{\ge} \sum_{i=1}^{I}\nabla _{\color{red}} \ell\left({q}_i(w^{r+1}_{k+1})\right)\left(q_i(w^{r+1}_k)-q_i(w^{r+1}_{k+1}))\right)+h_k(x^r_k)-h_k(x^{r+1}_k)+\frac{1}{2}\|q_i(w^{r+1}_{k+1})-q_i(w^{r+1}_k)\|^2\nonumber\\
&\stackrel{\rm (i)}\ge \sum_{i=1}^{I}{\langle  2q_i(w^{r+1}_{k+1}) \partial q_i(w^{r+1}_{k+1}), x^{r}_k-x^r_{k+1}\rangle}+h_k(x^r_k)-h_k(x^{r+1}_k)+\frac{1}{2}\|q_i(w^{r+1}_{k+1})-q_i(w^{r+1}_k)\|^2\nonumber\\
&\stackrel{\rm (ii)}\ge \frac{1}{2}\sum_{i=1}^{I}\|q_i(w^{r+1}_{k+1})-q_i(w^{r+1}_k)\|^2
\end{align}
where ${\rm (i)}$ is due to the fact that ${\nabla \ell\left(q_i(w^{r+1}_{k+1})\right)=2 q_i(w^{r+1}_{k+1})\ge 0}$ and the fact that $q_i(\cdot)$ is a convex function (albeit nonsmooth); ${\rm (ii)}$ is due to the optimality condition for the $x^{r+1}_{k}$ subproblem. Therefore we have the following sufficient descent estimate
\begin{align}
&f(x^r)-f(x^{r+1})=f(w^{r+1}_1)-f(w^{r+1}_{K+1})\ge \sum_{i=1}^{I}\sum_{j=1}^{K} \frac{1}{2}\|q_i(w^{r+1}_{j+1})-q_i(w^{r+1}_j)\|^2.
\end{align}

Next we proceed to estimate the cost-to-go. To this end, we first
bound $\|\nabla_k g(x^{r+1})-\nabla_{k}g(x^{r+1}_k, w^{r+1}_{-k})\|$. We have the following
\begin{align}
\|\nabla_k g(x^{r+1})-\nabla_{k}g(x^{r+1}_k, w^{r+1}_{-k})\|&=2\left\|\sum_{i=1}^{I}a_{i,k}\left(q_i(x^{r+1})-q_i(w^{r+1}_{k+1})\right)\right\|\nonumber\\
&\le 2\max_{i}\|a_{i,k}\|\sum_{i=1}^{I}\left\|q_i(x^{r+1})-q_i(w^{r+1}_{k+1})\right\|\nonumber\\
&\le 2\max_{i}\|a_{i,k}\|\sum_{i=1}^{I}\sum_{j=1}^{K}\left\|q_i(w_{j}^{r+1})-q_i(w^{r+1}_{j+1})\right\|.\nonumber
\end{align}

Consequently the cost-to-go estimate can be expressed as
\begin{align}
f(x^{r+1})-f(x^*)&\le \langle\nabla g(x^{r+1}), x^{r+1}-x^*\rangle+h(x^{r+1})-h(x^*)\nonumber\\
&=\sum_{k=1}^{K}\langle \nabla_k g(x^{r+1})-\nabla_k g(x^{r+1}_k, w^{r+1}_{-k}), x^{r+1}_k-x^*_k\rangle\nonumber\\
&\quad\quad+\sum_{k=1}^{K}\langle \nabla_k g(x^{r+1}_k, w^{r+1}_{-k}), x^{r+1}_k-x^*_k\rangle+h(x^{r+1})-h(x^*)\nonumber\\
&\le \sum_{k=1}^{K}\|\nabla_k g(x^{r+1})-\nabla_k g(x^{r+1}_k, w^{r+1}_{-k})\| \|x^{r+1}_k-x^*_k\|\nonumber\\
&\le 2\sum_{k=1}^{K}\max_{i}\|a_{i,k}\|\sum_{i=1}^{I}\sum_{j=1}^{K}\left\|q_i(w_{j}^{r+1})-q_i(w^{r+1}_{j+1})\right\|\|x^{r+1}_k-x^*_k\|.
\end{align}
Finally, we have
\begin{align}
(f(x^{r+1})-f(x^*))^2\le 4\left(\sum_{k=1}^{K}\max_{i}\|a_{i,k}\|\right)^2 K I  R^2\sum_{i=1}^{I}\sum_{j=1}^{K}\|q_i(w_{j}^{r+1})-q_i(w^{r+1}_{j+1})\|^2.
\end{align}

Then we have the following result.
\begin{corollary}\label{cor:complexity3}
Suppose Assumption A holds true, and suppose $g(\cdot)$ takes the form as expressed in \eqref{eq:l2-svm}. Let $\{\bx^r\}$ be the sequence generated by the BCM algorithm with G-S rule. Then we have
\begin{align}
\Delta^r=f(\bx^r)-f^*\le \frac{c_8}{\sigma_8}\frac{1}{r}
\end{align}
where
\begin{align}
\sigma_8:&=\frac{1}{8\left(\sum_{k=1}^{K}\max_{i}\|a_{i,k}\|\right)^2 K I  R^2 }, \quad
{c}_8:=\max\{4\sigma_8-2, f(\bx^1)-f^*,2\}\nonumber.
\end{align}
\end{corollary}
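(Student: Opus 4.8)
The plan is to combine the two structural estimates that have just been established immediately above the statement --- the sufficient-descent inequality
\[
f(x^{r})-f(x^{r+1})\ \ge\ \frac12\sum_{i=1}^{I}\sum_{j=1}^{K}\|q_i(w^{r+1}_{j+1})-q_i(w^{r+1}_j)\|^2
\]
and the cost-to-go estimate
\[
(f(x^{r+1})-f(x^*))^2\ \le\ 4\Big(\sum_{k=1}^{K}\max_i\|a_{i,k}\|\Big)^2 K I R^2\sum_{i=1}^{I}\sum_{j=1}^{K}\|q_i(w^{r+1}_j)-q_i(w^{r+1}_{j+1})\|^2
\]
--- into a single quadratic recursion on the optimality gap $\Delta^{r}=f(x^{r})-f^{*}$, and then to invoke the elementary induction already carried out in the proof of Theorem \ref{thm:complexity}.

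First I would eliminate the common inner double sum $S^{r+1}:=\sum_{i,j}\|q_i(w^{r+1}_{j+1})-q_i(w^{r+1}_j)\|^2$ between the two bounds. The sufficient-descent inequality gives $S^{r+1}\le 2(\Delta^{r}-\Delta^{r+1})$, and substituting this into the cost-to-go estimate yields
\[
(\Delta^{r+1})^2\ \le\ 8\Big(\sum_{k=1}^{K}\max_i\|a_{i,k}\|\Big)^2 K I R^2\,\big(\Delta^{r}-\Delta^{r+1}\big),
\]
that is, $\Delta^{r}-\Delta^{r+1}\ge \sigma_8(\Delta^{r+1})^2$ with $\sigma_8$ exactly as stated; the factor $8$ is precisely the product of the $\tfrac12$ from sufficient descent and the $4$ from cost-to-go. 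At this point I would record that $R$ (hence $\sigma_8$) is finite under the standing level-set compactness used with \eqref{eq:Ra}, and that the bound $\|x^{r+1}_k-x^*_k\|\le R$ invoked in the cost-to-go derivation is legitimate because the sufficient-descent inequality makes $\{\Delta^{r}\}$ nonincreasing, so every iterate remains in $\{x:f(x)\le f(x^1)\}$.

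Finally I would rewrite the recursion as $\sigma_8(\Delta^{r+1})^2+\Delta^{r+1}\le\Delta^{r}$ and reproduce verbatim the induction from Theorem \ref{thm:complexity}: using $\Delta^1=f(x^1)-f^*\le c_8$ together with the quadratic formula (treating the cases $4\sigma_8-1\ge 0$ and $4\sigma_8-1<0$ separately) to get $\Delta^2\le c_8/(2\sigma_8)$, and then showing that $\Delta^{r}\le c_8/(r\sigma_8)$ forces $\Delta^{r+1}\le c_8/((r+1)\sigma_8)$ by means of $c_8\ge 2$ and $r\ge 2$. This gives $\Delta^{r}\le \frac{c_8}{\sigma_8}\frac1r$ for all $r\ge 1$, which is the claim. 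I do not expect a genuine obstacle here: the heavy lifting (the two displayed estimates, including the handling of the nonsmooth $q_i$ via convexity of $q_i$ and $\nabla\ell(q_i)=2q_i\ge 0$) is already done, and what remains is the constant bookkeeping together with the reuse of the generic recursion-to-rate argument.
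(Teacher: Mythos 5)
Your proposal is correct and follows exactly the route the paper intends: the two displayed estimates preceding the corollary are combined by eliminating the common sum $\sum_{i,j}\|q_i(w^{r+1}_{j+1})-q_i(w^{r+1}_j)\|^2$ to obtain $\Delta^{r}-\Delta^{r+1}\ge\sigma_8(\Delta^{r+1})^2$, and the generic induction from Theorem \ref{thm:complexity} then yields the stated rate with the stated constants. The factor bookkeeping ($8=2\times 4$) and the justification that iterates stay in the level set so that $R$ applies are both handled correctly.
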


To close this section, we mention that the E-C rule also achieves a sublinear rate for both the composite case and the L2-SVM cases. The analysis follows a similar argument as those presented above, therefore it is not repeated here.

\subsection{Extensions}
We briefly discuss a few extensions of the results presented so far in this section.

The first extension is to the BSUM algorithm without strongly convex upper bounds. For example, to extend Corollary \ref{cor:complexity}, suppose that $g(x)$ is given by \eqref{eq:g_composite}. Further assume that $q_k(y_k; y)$ is an upper bound function for
$$\ell(y_1, \cdots, y_K):=\sum_{i=1}^{I}\ell^i(y_1, y_2,\cdots, y_K)=\sum_{i=1}^{I}\ell^i(A^i_1x_1, A^i_2x_2,\cdots, A^i_K x_K)$$
which is not necessarily strongly convex with respect to $x_k$. If $q_k(y_k;y)$  and $\ell(y_1,\cdots, y_K)$ together satisfy Assumption B for each $k$, then the BSUM algorithm that successively minimizes the upper bounds $q_k$'s achieves a sublinear rate $O(1/r)$.

{Second, our analysis can be directly applied to the algorithm with random permutation of the coordinates between the iterations, a strategy that has been found to be effective in practice \cite[Section 8.5]{shalev14proximaldual}. Indeed, the analysis for both BSUM and BCM  with the G-S rule only requires that within each iteration the coordinates are chosen cyclically. There is no need to maintain the same order across different iterations.}

Third, if the smooth function $g(x)$ is given by the composite form expressed in \eqref{eq:g_composite}, and that the following additional assumptions are satisfied, then the BCM algorithm is capable of linear convergence.
\pn {\bf Assumption F.}
\begin{enumerate}
\item Each $h_k$ satisfies either one of the following
conditions:
\begin{enumerate}
\item The epigraph of $h_k(x_k)$ is a polyhedral set.
\item $h_k(x_k)=\lambda_k\|x_k\|_1+\sum_{J}w_J\|x_{k,J}\|_2$, where
$x_k=(\cdots, x_{k,J},\cdots)$ is a partition of $x_k$ with $J$
being the partition index.
\item Each $h_k(x_k)$ is the sum of the functions described in the previous
two items.
\end{enumerate}
\item  The feasible sets $X_k$, $k=1,\cdots,K$ are polyhedral sets.
\item  Each $A^i_k$ has full column rank.
\end{enumerate}
The key for proving the linear convergence is to show certain error bound condition holds true for different types of problems. We refer the readers to \cite{Luo92CD, Luo92linear_convergence,tseng09,hong13BSUMM,Sanjabi13} for detailed arguments.

\newsection{Concluding Remarks}
In this paper we have analyzed the iteration complexity of a family of BCD-type algorithms for solving general convex nonsmooth problems of the form \eqref{eq:bcd_problem}. Using a three-step argument, we show that the family of BCD-type algorithms, which includes BCM, BCGD, BCPG algorithms with G-S, E-C, G-So and MBI update rules, converges globally in a sublinear rate of $\mathcal{O}(1/r)$. {It should be noted that in case of the classical BCM algorithm, such sublinear rate can be achieved even without the per-block strong convexity.}  As a future work, it will be interesting to see whether the three-step approach can be extended to establish the iteration complexity bounds for other first order methods.

\bibliographystyle{IEEEbib}

\bibliography{ref,biblio}

\begin{thebibliography}{10}

\bibitem{bertsekas99}
D.~P. Bertsekas,
\newblock {\em Nonlinear Programming, 2nd ed},
\newblock Athena Scientific, Belmont, MA, 1999.

\bibitem{tseng09coordiate}
P.~Tseng and S.~Yun,
\newblock ``A coordinate gradient descent method for nonsmooth separable
  minimization,''
\newblock {\em Mathematical Programming}, vol. 117, pp. 387--423, 2009.

\bibitem{zhang13linear}
H.~Zhang, J.~Jiang, and Z.-Q. Luo,
\newblock ``On the linear convergence of a proximal gradient method for a class
  of nonsmooth convex minimization problems,''
\newblock {\em Journal of the Operations Research Society of China}, vol. 1,
  no. 2, pp. 163--186, 2013.

\bibitem{shalev11}
S.~Shalev-Shwartz and A.~Tewari,
\newblock ``Stochastic methods for $\ell_1$ regularized loss minimization,''
\newblock {\em Journal of Machine Learning Research}, vol. 12, pp. 1865--1892,
  2011.

\bibitem{Beck13}
A.~Beck and L.~Tetruashvili,
\newblock ``On the convergence of block coordinate descent type methods,''
\newblock {\em SIAM Journal on Optimization}, vol. 23, no. 4, pp. 2037--2060,
  2013.

\bibitem{Razaviyayn12SUM}
M.~Razaviyayn, M.~Hong, and Z.-Q. Luo,
\newblock ``A unified convergence analysis of block successive minimization
  methods for nonsmooth optimization,''
\newblock {\em SIAM Journal on Optimization}, vol. 23, no. 2, pp. 1126--1153,
  2013.

\bibitem{tseng09}
P.~Tseng and S.~Yun,
\newblock ``Block-coordinate gradient descent method for linearly constrained
  nonsmooth separable optimization,''
\newblock {\em Journal of Optimization Theory and Applications}, vol. 140, pp.
  513--535, 2009.

\bibitem{nestrov12}
Y.~Nesterov,
\newblock ``Efficiency of coordiate descent methods on huge-scale optimization
  problems,''
\newblock {\em SIAM Journal on Optimization}, vol. 22, no. 2, pp. 341--362,
  2012.

\bibitem{tseng01}
P.~Tseng,
\newblock ``Convergence of a block coordinate descent method for
  nondifferentiable minimization,''
\newblock {\em Journal of Optimization Theory and Applications}, vol. 103, no.
  9, pp. 475--494, 2001.

\bibitem{Chen2012MBI}
B.~Chen, Z.~Li S.~He, and S.~Zhang,
\newblock ``Maximum block improvement and polynomial optimization,''
\newblock {\em SIAM Journal on Optimization}, vol. 22, no. 1, pp. 87--107,
  2012.

\bibitem{Friedman10}
Friedman J, Hastie T, and Tibshirani R.,
\newblock ``Regularization paths for generalized linear models via coordinate
  descent,''
\newblock {\em Journal of Statistical Software}, vol. 33, no. 1, pp. 1--22,
  2010.

\bibitem{Saha10}
A.~Saha and A.~Tewari,
\newblock ``On the nonasymptotic convergence of cyclic coordinate descent
  method,''
\newblock {\em SIAM Journal on Optimization}, vol. 23, no. 1, pp. 576--601,
  2013.

\bibitem{bertsekas96}
D.~P. Bertsekas and J.~N. Tsitsiklis,
\newblock {\em Neuro-Dynamic Programming},
\newblock Athena Scientific, Belmont, MA, 1996.

\bibitem{bertsekas97}
D.~P. Bertsekas and J.~N. Tsitsiklis,
\newblock {\em Parallel and Distributed Computation: Numerical Methods, 2nd
  ed},
\newblock Athena Scientific, Belmont, MA, 1997.

\bibitem{ortega72}
J.~M. Ortega and W.~C. Rheinboldt,
\newblock {\em Iterative Solution of Nonlinear Equations in Several Variables},
\newblock Academic Press, 1972.

\bibitem{Grippo00}
L.~Grippo and M.~Sciandrone,
\newblock ``On the convergence of the block nonlinear {Gauss-Seidel} method
  under convex constraints,''
\newblock {\em Operations Research Letters}, vol. 26, pp. 127--136, 2000.

\bibitem{luo93errorbound:10.1007/BF02096261}
Z.-Q. Luo and P.~Tseng,
\newblock ``Error bounds and convergence analysis of feasible descent methods:
  a general approach,''
\newblock {\em Annals of Operations Research}, vol. 46-47, pp. 157--178, 1993.

\bibitem{Luo92CD}
Z.-Q. Luo and P.~Tseng,
\newblock ``On the convergence of the coordinate descent method for convex
  differentiable minimization,''
\newblock {\em Journal of Optimization Theory and Application}, vol. 72, no. 1,
  pp. 7--35, 1992.

\bibitem{Luo92linear_convergence}
Z.-Q. Luo and P.~Tseng,
\newblock ``On the linear convergence of descent methods for convex essentially
  smooth minimization,''
\newblock {\em SIAM Journal on Control and Optimization}, vol. 30, no. 2, pp.
  408--425, 1992.

\bibitem{Luo93dual}
Z.-Q. Luo and P.~Tseng,
\newblock ``On the convergence rate of dual ascent methods for strictly convex
  minimization.,''
\newblock {\em Mathematics of Operations Research}, vol. 18, no. 4, pp.
  846--867, 1993.

\bibitem{tseng09approximation}
P.~Tseng,
\newblock ``Approximation accuracy, gradient methods, and error bound for
  structured convex optimization,''
\newblock {\em Mathematical Programming}, vol. 125, no. 2, pp. 263--295, 2010.

\bibitem{richtarik12}
P.~Richtarik and M.~Takac,
\newblock ``Iteration complexity of randomized block-coordinate descent methods
  for minimizing a composite function,''
\newblock {\em Mathematical Programming}, vol. 144, pp. 1--38, 2014.

\bibitem{lu13complexity}
Z.~Lu and L.~Xiao,
\newblock ``On the complexity analysis of randomized block-coordinate descent
  methods,''
\newblock 2013,
\newblock accepted by Mathematical Programming.

\bibitem{Beck13b}
A.~Beck,
\newblock ``On the convergence of alternating minimization with applications to
  iteratively reweighted least squares and decomposition schemes,''
\newblock {\em SIAM Journal on Optimization}, vol. 25, no. 1, pp. 185--209,
  2015.

\bibitem{HeLiaoHanYang2002}
B.~He, L.~Liao, D.~Han, and H.~Yang,
\newblock ``A new inexact alternating directions method for monotone
  variational inequalities,''
\newblock {\em Mathematical Programming}, vol. 92, no. 1, pp. 103--118, 2002.

\bibitem{WangYuan2012}
X.~Wang and X.~Yuan,
\newblock ``The linearized alternating direction method of multipliers for
  dantzig selector,''
\newblock {\em SIAM Journal on Scientific Computing}, vol. 34, no. 5, pp.
  2792--2811, 2012.

\bibitem{zhang11primaldual}
X.~Zhang, M.~Burger, and S.~Osher,
\newblock ``A unified primal-dual algorithm framework based on {B}regman
  iteration,''
\newblock {\em Journal of Scientific Computing}, vol. 46, no. 1, pp. 20--46,
  2011.

\bibitem{Yang10ADMM}
J.~Yang, Y.~Zhang, and W.~Yin,
\newblock ``A fast alternating direction method for {TVL1-L2} signal
  reconstruction from partial fourier data,''
\newblock {\em IEEE Journal of Selected Topics in Signal Processing}, vol. 4,
  no. 2, pp. 288--297, 2010.

\bibitem{hong15busmm_spm}
M.~Hong, M.~Razaviyayn, Z.-Q. Luo, and J.-S. Pang,
\newblock ``A unified algorithmic framework for block-structured optimization
  involving big data,''
\newblock 2015,
\newblock submitted for publication.

\bibitem{Combettes09}
P.~Combettes and J.-C. Pesquet,
\newblock ``Proximal splitting methods in signal processing,''
\newblock in {\em Fixed-Point Algorithms for Inverse Problems in Science and
  Engineering}, Springer Optimization and Its Applications, pp. 185--212.
  Springer New York, 2011.

\bibitem{Mairal13}
J.~Mairal,
\newblock ``Optimization with first-order surrogate functions,''
\newblock in {\em International Conference on Machine Learning (ICML).}, 2013.

\bibitem{cover05}
T.~M. Cover and J.~A. Thomas,
\newblock {\em Elements of Information Theory, second edition},
\newblock Wiley, 2005.

\bibitem{yu04}
W.~Yu, W.~Rhee, S.~Boyd, and J.~M. Cioffi,
\newblock ``Iterative water-filling for {G}aussian vector multiple-access
  channels,''
\newblock {\em IEEE Transactions on Information Theory}, vol. 50, no. 1, pp.
  145--152, 2004.

\bibitem{Nesterov04}
Y.~Nesterov,
\newblock {\em Introductory lectures on convex optimization: A basic course},
\newblock Springer, 2004.

\bibitem{Daubechies10}
I.~Daubechies, R.~DeVore, M.~Fornasier, and C.~S. Gunturk,
\newblock ``Iteratively reweighted least squares minimization for sparse
  recovery,''
\newblock {\em Communications on Pure and Applied Mathematics}, vol. 63, no. 1,
  pp. 1--38, 2010.

\bibitem{Hiriart-Urruty}
J.-B. Hiriart-Urruty and C.~Lemarechal,
\newblock {\em Convex Analysis and Minimization Algorithms I: Fundamentals},
\newblock Springer, 1996.

\bibitem{Beck:2009:FIS:1658360.1658364}
A.~Beck and M.~Teboulle,
\newblock ``A fast iterative shrinkage-thresholding algorithm for linear
  inverse problems,''
\newblock {\em SIAM Journal on Imgaging Science}, vol. 2, no. 1, pp. 183--202,
  2009.

\bibitem{tseng08acc}
P.~Tseng,
\newblock ``On accelerated proximal gradient methods for convex-concave
  optimization,''
\newblock 2008,
\newblock preprint.

\bibitem{scutari13decomposition}
G.~Scutari, F.~Facchinei, P.~Song, D.~P. Palomar, and J.-S. Pang,
\newblock ``Decomposition by partial linearization: Parallel optimization of
  multi-agent systems,''
\newblock {\em IEEE Transactions on Signal Processing}, vol. 63, no. 3, pp.
  641--656, 2014.

\bibitem{shalev14proximaldual}
S.~Shalev-Shwartz and T.~Zhang,
\newblock ``Proximal stochastic dual coordinate ascent methods for regularzied
  loss minimization,''
\newblock {\em Journal of Machine Learning Rsearch}, vol. 14, pp. 567--599,
  2013.

\bibitem{hong13BSUMM}
M.~Hong, T.-H. Chang, X.~Wang, M.~Razaviyayn, S.~Ma, and Z.-Q. Luo,
\newblock ``A block successive upper bound minimization method of multipliers
  for linearly constrained convex optimization,''
\newblock 2013,
\newblock Preprint, available online arXiv:1401.7079.

\bibitem{Sanjabi13}
M.~Sanjabi, M.~Kadkhodaei, and Z.-Q. Luo,
\newblock ``On the linear convergence of approximate proximal splitting methods
  for non-smooth convex minimization,''
\newblock 2012,
\newblock manuscript.

\end{thebibliography}

\end{document}